\newtheorem{theorem}{Theorem}[section]
\newtheorem{corollary}{Corollary}
\newtheorem{lemma}[theorem]{Lemma}
\newtheorem{proposition}{Proposition}
\theoremstyle{definition}
\newtheorem{definition}[theorem]{Definition}
\newcommand{\ip}[2]{\left \langle #1,\ #2\right \rangle}
\author{Benjamin P. Russo$^{1*}$}
\thanks{$^{1}$ Farmingdale State College (SUNY), Department of Mathematics, \href{}{russobp@farmingdale.edu}}
\author{Joel A. Rosenfeld$^{4\dagger\ddagger}$}
\thanks{$^4$ University of South Florida, Department of Mathematics and Statistics, \href{}{rosenfeldj@usf.edu}}
\thanks{$^\dagger$ Research supported by AFOSR Award FA9550-20-1-0127.}
\thanks{$^\ddagger$ Research supported by NSF grant ECCS-2027976.}
\thanks{$^*$ Corresponding author}
\title{Liouville Operators over the Hardy Space}
\date{}
\begin{document}
\maketitle

\begin{abstract}The role of Liouville operators in the study of dynamical systems through the use of occupation measures have been an active area of research in control theory over the past decade. This manuscript investigates Liouville operators over the Hardy space, which encode complex ordinary differential equations in an operator over a reproducing kernel Hilbert space.
\end{abstract}

\section{Introduction}
\label{intro}

Traditionally the study of Liouville operators has been constrained to Banach spaces of continuous functions, where moment problems relating Liouville operators and occupation measures have been investigated. It was observed in \cite{rosenfeld2019occupation} that the functional relationship between occupation measures and Liouville operators can be fruitfully exploited within the context of reproducing kernel Hilbert spaces, which gave rise to occupation kernels (cf. \cite{rosenfeld2019occupation, rosenfeldCDC2019}).

This shift in the study of Liouville operators have led to several nontrival results in system identification and dynamic mode decomposition (DMD) \cite{rosenfeld2021dynamic,rosenfelddmd}. DMD has traditionally invoked Liouville operators as continuous generators for Koopman (or composition) semi-groups corresponding to discrete time dynamics \cite{kutz2016dynamic}. Work such as \cite{williams2015kernel}, connected the study of DMD and Koopman operators with RKHSs, where continuous time dynamics were discretized and the discretized system was analyzed as a proxy for the continuous time system. This discretization has thus far been necessary for the application of DMD to a dynamical system. Through the incorporation of occupation kernels, \cite{rosenfelddmd} gave a method of the direct DMD analysis of continuous time systems through the Liouville operator over a RKHS. It should be emphasized that the collection of Liouville operators is strictly larger than that of Koopman generators, where the latter requires that a dynamical system admits a discretization. This is not always possible, since dynamics such as $\dot x = 1+x^2$ are not discretizable, yielding a solution with finite escape time.

These recent results position the study of Liouville operators over RKHSs as an important research direction for both pure and applied mathematics. Important for both is the characterization of densely defined Liouville operators over various RKHSs, where each new characterization opens new relations for the data driven study of continuous dynamical systems. The resolution of these questions inform the selection of RKHS for particular applications in systems theory. Moreover, the introduction of \emph{scaled} Liouville operators (cf. \cite{rosenfelddmd}) allows for the representation of a dynamical system through a compact operator over the exponential dot product kernels' native space (the real valued counterpart of the Fock space). The idea of scaled Liouville operators are expanded here as Liouville weighted composition operators, which combines a composition operator with the Liouville operator to produce a bounded and sometimes compact operator that represents a dynamical system.

This manuscript investigates Liouville and Liouville weighted composition operators over the Hardy space. The Hardy space provides a model for the broader investigation of these operators, where properties such as the inner outer factorization of Hardy space functions \cite{hoffman2007banach}, the characterization of densely defined multiplication operators \cite{sarason2008unboundedtoeplitz}, and the representation of Hardy space functions as both analytic functions within the disc and their representation as a subspace of $L^2$ of the circle allow provide tools through which Liouville operators, their spectrum, and symbols may be investigated.

\section{Definitions and Preliminaries}
\begin{definition}A reproducing kernel Hilbert space over a set $X$ is a Hilbert space of functions in which point evaluation $e_x(f)=f(x)$ is a continuous linear functional for all $x\in X$. Thus Riesz representation guarantees that for each $x\in X$ there exists a function $k_x\in H$ such that $f(x)=\ip{f}{k_x}$.
\end{definition}
One of the most studied reproducing kernel Hilbert spaces is the complex Hardy-Hilbert space over the disc. The Hardy space over the disc consists of analytic continuations of $L^2$ functions of the circle $\mathbb{T}$ whose Fourier coefficients, $\hat f(n) := \int_{-\pi}^\pi f(e^{i\theta}) e^{-i n \theta} d\theta$, are non-zero only for $n\geq 0$. In fact, one can define \[H^2(\mathbb{T})=\left\{f\in L^2(\mathbb{T}): f(z)=\sum_{n=0}^\infty \hat{f}(n)z^n\right\}.\] Functions in the Hardy space over the \emph{disc} can then be obtained by taking functions in $H^2(\mathbb{T})$ and integrating them against the Poisson kernel \cite{katznelson}. This amounts to defining $f(z)=\sum_{n=0}^\infty f_nz^n$ for $z\in \mathbb{D}$ where the Taylor coefficients agree with the Fourier coefficients of the function on the circle, $f_n=\hat{f}(n)$. Thus, 
\[H^2(\mathbb{D})=\left\{f:\mathbb{D}\rightarrow\mathbb{C}: f(z)=\sum_{n=0}^\infty f_nz^n \text{ and }\sum_{n=0}^\infty |f_n|^2<\infty \right\},\]
where radial limits exists almost everywhere. Of vast importance in Hardy space theory is the inner-outer factorization theorem \cite{cimaross, rosenblumrovnyak}. 
\begin{theorem}
Every function $f\in H^2$ admits a representation as $f=F\varphi$ where $F$ is an inner function and $\varphi$ is a outer function. This factorization is unique up to a unimodular constant.
\end{theorem}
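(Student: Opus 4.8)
The plan is to deduce the factorization from Beurling's characterization of the invariant subspaces of the unilateral shift. Throughout, assume $f\not\equiv 0$ (the zero function being the trivial/excluded case). First I would introduce the shift $S\colon H^2\to H^2$, $(Sf)(z)=zf(z)$, which is an isometry, and recall Beurling's theorem: every nonzero closed $S$-invariant subspace of $H^2$ has the form $F H^2$ for some inner function $F$, determined uniquely up to a unimodular constant. (If a self-contained treatment is wanted, Beurling's theorem is itself proved by analyzing the wandering subspace $M\ominus SM$ of an invariant subspace $M$: one shows it is at most one-dimensional, and that any unit vector in it is inner, using that if $g,h$ are unit vectors orthogonal to $S^n g$ and $S^n h$ for all $n\neq 0$ then $|g|=|h|$ a.e.\ on $\mathbb{T}$.)

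For existence, given $f\neq 0$ let $M_f=\overline{\operatorname{span}}\{S^n f : n\geq 0\}$ be the smallest closed $S$-invariant subspace containing $f$. Beurling gives $M_f=F H^2$ for some inner $F$, and since $f\in M_f$ there is $\varphi\in H^2$ with $f=F\varphi$. To see $\varphi$ is outer, recall $\varphi$ is outer precisely when it has no nonconstant inner divisor (equivalently $\overline{\operatorname{span}}\{S^n\varphi\}=H^2$). If $\varphi=G\psi$ with $G$ inner and $\psi\in H^2$, then $f=(FG)\psi\in (FG)H^2$, so $M_f\subseteq (FG)H^2\subseteq F H^2=M_f$, forcing $(FG)H^2=F H^2$; the uniqueness clause of Beurling then forces $G$ to be a unimodular constant. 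Hence $\varphi$ is outer and $f=F\varphi$ is the desired factorization.

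For uniqueness, suppose $f=F_1\varphi_1=F_2\varphi_2$ with $F_i$ inner and $\varphi_i$ outer. Since $\varphi_i$ is outer, $\overline{\operatorname{span}}\{S^n(F_i\varphi_i)\}=F_i\,\overline{\operatorname{span}}\{S^n\varphi_i\}=F_i H^2$, so $F_1 H^2=M_f=F_2 H^2$; Beurling's uniqueness gives $F_1=cF_2$ with $|c|=1$, and cancelling yields $\varphi_1=\bar c\,\varphi_2$. Thus the factorization is unique up to a unimodular constant.

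The main obstacle is that the whole argument is carried by Beurling's theorem, so in a self-contained presentation the real work is the one-dimensionality of the wandering subspace and the identification of its generator as inner. An alternative, more constructive route avoids Beurling entirely: put $\varphi(z)=\exp\left(\frac{1}{2\pi}\int_{-\pi}^{\pi}\frac{e^{i\theta}+z}{e^{i\theta}-z}\log|f(e^{i\theta})|\,d\theta\right)$, which first requires establishing $\log|f|\in L^1(\mathbb{T})$ for nonzero $f\in H^2$; then Jensen's inequality gives $|\varphi(z)|^2\leq P[\,|f|^2\,](z)$, so $\varphi\in H^2$, while $F:=f/\varphi$ is analytic and nonvanishing inside with $|F|=1$ a.e.\ on $\mathbb{T}$. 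In that approach the crux is showing $F\in H^\infty$ with $\|F\|_\infty\leq 1$, which relies on Smirnov-class theory ($F\in N^+$ together with unimodular boundary values forces boundedness). I would present the Beurling-based proof as the main line and remark on this alternative.
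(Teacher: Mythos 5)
Your proof is correct (with the necessary exclusion of $f\equiv 0$, which you note), but it follows a genuinely different route from the paper: the paper does not prove this statement at all — it records it as the classical Riesz--Smirnov inner-outer factorization with citations to Cima--Ross and Rosenblum--Rovnyak, where the standard proof is essentially the ``alternative'' you sketch at the end: show $\log|f|\in L^1(\mathbb{T})$ for nonzero $f\in H^2$, define the outer part by the exponential Herglotz/Poisson formula (the same representation of outer functions the paper displays), and verify via Smirnov-class estimates that the quotient is inner. Your main line instead deduces everything from Beurling's theorem: $M_f=\overline{\operatorname{span}}\{S^n f\}=FH^2$ produces the inner factor, a second application of Beurling (your ``no nonconstant inner divisor'' step applied to $M_\varphi$) shows the cofactor is cyclic — which is exactly the paper's definition of outer, namely that $\varphi H^2$ is dense — and uniqueness follows from Beurling's uniqueness clause together with the fact that multiplication by an inner function is an isometry, so $\overline{\operatorname{span}}\{S^n(F_i\varphi_i)\}=F_iH^2$. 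There is no circularity, since the wandering-subspace proof of Beurling does not use the factorization. What your route buys is brevity and a clean uniqueness argument in operator-theoretic language well suited to this paper's shift-invariant-subspace viewpoint; what it costs is that the analytic content (integrability of $\log|f|$, the Blaschke condition, the constancy of an inner function $G$ with $GH^2=H^2$, which your uniqueness step implicitly relies on) is outsourced to Beurling's theorem, and it yields no explicit formula for the outer factor, whereas the constructive proof produces precisely the exponential formula the paper quotes and uses in the Sarason-type analysis later on.
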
 
This representation is called an \emph{inner-outer factorization} for the function $f\in H^2$. An inner function is a function $F\in H^2$ such that $|F|\leq 1$ and an outer function $G$ is a function such that $GH^2$ is dense in $H^2$. Outer functions have representation as exponentials \cite{vukotic}, i.e. if $G$ is outer then 
\[G(z)=\exp\left (\frac{1}{2\pi}\int_0^{2\pi}\frac{e^{i\theta}+z}{e^{i\theta}-z}\log|\tilde{G}(e^{i\theta})|d\theta\right)\]
for some function $\tilde{G}$. This inner outer factorization is used in establishing many properties for functions in $H^2$. As an example, bounded multiplication operators on $H^2(\mathbb{D})$ have been extensively studied, where it has been determined that the bounded multipliers for $H^2(\mathbb{D})$ is the collection of bounded analytic functions on the disc, $H^\infty(\mathbb{D})$. In contrast, densely defined multiplication operators have received less attention but still have a complete description due to Sarason utilizing the inner outer factorization that exists for functions in $H^2$ \cite{sarason2008unboundedtoeplitz}.

\section{Liouville, Liouville Weighted Composition Operators, and Occupation Kernels}

Given a function $f:\mathbb{D} \to \mathbb{C}$, and setting $\mathcal{D}(A_f) := \{ g \in H^2 : f(\cdot) \frac{d}{dz} g(\cdot) \in H^2\}$, the Liouville operator with symbol $f$, $A_f : \mathcal{D}(A_f) \to H^2$, is given as $A_f g = f(\cdot) \frac{d}{dz} g(\cdot)$. Liouville operators are automatically closed with this domain (cf. \cite{rosenfeld2019occupation}), and thus, when $\mathcal{D}(A_f)$ is all of $H^2$, $A_f$ is bounded. $A_f$ is nearly always unbounded, owing to the inclusion of the differentiation operator, with the notable exception of $f \equiv 0$. When $A_f$ is densely defined, it possesses a well defined adjoint \cite{pedersen2012analysis}.

Let $T > 0$ and suppose that $\theta : [0,T] \to \mathbb{D}$ defines a continuous signal in $\mathbb{D}$. The functional on $H^2$ given as $g \mapsto \int_0^T g(\theta(t)) dt$ is bounded, and hence, there is a function in $H^2$, denoted $\Gamma_{\theta}$, such that $\int_0^T g(\theta(t)) dt = \langle g, \Gamma_{\theta} \rangle_{H^2}$ which is called the occupation kernel corresponding to $\theta$ in $H^2$. When $\gamma: [0,T] \to \mathbb{D}$ is a trajectory satisfying $\dot \gamma = f(\gamma)$, then $\Gamma_{\gamma} \in \mathcal{D}(A_f^*)$ and
\begin{equation}\label{eq:occupationkernelrelation}
    A_f^* \Gamma_\gamma = K_{\gamma(T)} - K_{\gamma(0)},
\end{equation} which is a critical relation in the development of finite rank represenations of Liouville operators in \cite{rosenfelddmd}.

Clearly, as Liouville operators are modally unbounded, a sequence of finite rank representations is not expected to converge to the Liouville operator itself. A remedy for this shortcoming was the introduction of scaled Liouville operators, $A_{f,a} g = a f(\cdot) \frac{d}{dz} g(a \cdot)$ for $0 < a < 1$, which are compact operators for a wide range of $f$ (e.g. $A_{f,a}$ is compact when $f$ is a polynomial) (cf. \cite{rosenfelddmd}). Though this was demonstrated for the exponential dot product space, the same proof holds over the Hardy space. Scaled Liouville operators can be seen as a special case of \textit{Liouville weighted composition operators} introduced in this manuscript as $A_{f,\varphi} g := f \frac{d}{dz} g(\varphi(\cdot)) \frac{d}{dz} \varphi(\cdot)$.

During the preparation of this manuscript, the authors became aware of a recently published work \cite{fatehi2021normality}, which gives a similar operator, $W_{f,\varphi} g = f(\cdot)\frac{d}{dz} g(\varphi(\cdot))$, and which contains Liouville weighted composition operators as a proper subset. However, with the generality introduced in \cite{fatehi2021normality}, there is a loss of structure of the operators that is exploited here. Specifically, note that for the same trajectory, $\gamma$, given above, the relation \eqref{eq:occupationkernelrelation} extends to Liouville weighted composition operators as follows:

\begin{theorem}\label{thm:occupationkernel_weightedcomp}
Suppose that $A_{f,\varphi}:\mathcal{D}(A_{f,\varphi}) \to H^2$ is a closed densely defined operator with domain $\mathcal{D}(A_{f,\varphi}) := \{ g \in H^2 : f(\cdot) \frac{d}{dz}g(\varphi(\cdot))\frac{d}{dz}\varphi(\cdot)\}$, and suppose that $\gamma:[0,T]\to H^2$ satisfies $\dot \gamma = f(\gamma)$. Then $\Gamma_{\gamma} \in \mathcal{D}(A_{f,\varphi}^*)$ and
\begin{equation}\label{eq:weightedcomprelation}
A_{f,\varphi}^*\Gamma_{\gamma} = K_{\varphi(\gamma(T))} - K_{\varphi(\gamma(0))}.
\end{equation}
\end{theorem}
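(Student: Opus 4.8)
The strategy is to verify the defining identity of the adjoint directly. For a densely defined operator $A$ one has $h \in \mathcal{D}(A^*)$ precisely when the functional $g \mapsto \langle Ag, h\rangle$ is bounded on $\mathcal{D}(A)$, in which case $A^*h$ is its Riesz representative. It therefore suffices to show that, for every $g \in \mathcal{D}(A_{f,\varphi})$,
\[ \langle A_{f,\varphi} g, \Gamma_\gamma\rangle = \langle g, K_{\varphi(\gamma(T))} - K_{\varphi(\gamma(0))}\rangle, \]
since the right-hand side is a bounded functional of $g$. I treat $\gamma$ as a trajectory valued in $\mathbb{D}$ and take for granted, as the setup implies, that $f$ and $\varphi$ are analytic on $\mathbb{D}$ with $\varphi(\mathbb{D}) \subseteq \mathbb{D}$, so that $K_{\varphi(\gamma(0))}$ and $K_{\varphi(\gamma(T))}$ are genuine reproducing kernels of $H^2$.

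Fix $g \in \mathcal{D}(A_{f,\varphi})$, so that $A_{f,\varphi} g = f \cdot (g' \circ \varphi) \cdot \varphi'$ lies in $H^2$ by definition of the domain, and in particular agrees pointwise on $\mathbb{D}$ with this product. Feeding $h = A_{f,\varphi} g$ into the occupation kernel identity $\langle h, \Gamma_\gamma\rangle = \int_0^T h(\gamma(t))\,dt$ yields
\[ \langle A_{f,\varphi} g, \Gamma_\gamma\rangle = \int_0^T f(\gamma(t))\, g'\big(\varphi(\gamma(t))\big)\, \varphi'(\gamma(t))\, dt. \]
The one substantive step is the observation that the hypothesis $\dot\gamma = f(\gamma)$ turns this integrand into an exact $t$-derivative: by the chain rule,
\[ \frac{d}{dt}\, g\big(\varphi(\gamma(t))\big) = g'\big(\varphi(\gamma(t))\big)\, \varphi'(\gamma(t))\, \dot\gamma(t) = f(\gamma(t))\, g'\big(\varphi(\gamma(t))\big)\, \varphi'(\gamma(t)). \]
Thus the integral telescopes, via the fundamental theorem of calculus, to $g(\varphi(\gamma(T))) - g(\varphi(\gamma(0)))$, and the reproducing property of $H^2$ rewrites this as $\langle g, K_{\varphi(\gamma(T))} - K_{\varphi(\gamma(0))}\rangle$. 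Since $g$ was an arbitrary element of the dense domain $\mathcal{D}(A_{f,\varphi})$, this gives $\Gamma_\gamma \in \mathcal{D}(A_{f,\varphi}^*)$ together with \eqref{eq:weightedcomprelation}.

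The only point that needs attention — bookkeeping rather than a genuine obstacle — is the legitimacy of the fundamental theorem of calculus step: one must know that $t \mapsto g(\varphi(\gamma(t)))$ is absolutely continuous on $[0,T]$ and that the integrand displayed above lies in $L^1[0,T]$. Both follow from standard facts: $\gamma$ has compact image in $\mathbb{D}$ and is absolutely continuous with $\dot\gamma = f \circ \gamma$ by the ODE; $\varphi$ and $g$, being analytic, are Lipschitz on compact subsets of $\mathbb{D}$; and a composition of an absolutely continuous curve with a Lipschitz map is again absolutely continuous, with the chain rule holding almost everywhere. All of the conceptual content of the theorem sits in the one-line identity linking the Liouville flow $\dot\gamma = f(\gamma)$ to the time-derivative of $g \circ \varphi \circ \gamma$; taking $\varphi$ to be the identity collapses the argument to the proof of \eqref{eq:occupationkernelrelation}, a useful consistency check.
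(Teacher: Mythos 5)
Your proof is correct and follows essentially the same route as the paper's: compute $\langle A_{f,\varphi}g,\Gamma_\gamma\rangle$ via the occupation kernel identity, recognize the integrand as $\frac{d}{dt}\,g(\varphi(\gamma(t)))$ using $\dot\gamma=f(\gamma)$, apply the fundamental theorem of calculus, and conclude boundedness of the functional from the reproducing-kernel form of the result. Your added remarks on absolute continuity and the chain rule only make explicit what the paper leaves implicit.
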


\begin{proof}
Let $g \in \mathcal{D}(A_{f,\varphi})$, then
\begin{gather*}
    \langle A_{f,\varphi} g, \Gamma_{\gamma}\rangle_{H^2} = \int_{0}^T \frac{d}{dz} g(\varphi(\gamma(t))) \frac{d}{dz}\varphi(\gamma(t)) f(\gamma(t))\\
    = \int_0^T \dot g(\varphi(\gamma(t))) dt = g(\varphi(\gamma(T))) - g(\varphi(\gamma(0)))\\
    = \langle g, K_{\varphi(\gamma(T))} - K_{\varphi(\gamma(0))}\rangle_{H^2}.
\end{gather*}

Hence, the functional $g \mapsto \langle A_{f,\varphi} g, \Gamma_{\gamma}\rangle_{H^2}$ is bounded by an application of the Cauchy-Schwarz inequality and the theorem follows.
\end{proof}

To wit, the inclusion of $\frac{d}{dz}\varphi$ in the multiplication symbol of the operator allows for the exchange of $\frac{d}{dz} g(\varphi(\gamma(t))) \frac{d}{dz}\varphi(\gamma(t)) f(\gamma(t))$ with $\dot g(\varphi(\gamma(t))$. This replacement yields the adjoint relation in \eqref{eq:weightedcomprelation}. 

It should be noted that for any continuous signal, $\theta : [0,T] \to \mathbb{D}$, the occupation kernel corresponding to $\theta$ is contained in the domain of the adjoint of any densely defined Liouville weighted composition operator through a different argument. This follows from expressing the occupation kernel as
\begin{gather*} \Gamma_{\theta}(z) = \langle \Gamma_{\theta}, K_z \rangle_{H^2(\mathbb{D})} = \overline{\langle K_z, \Gamma_{\theta} \rangle_{H^2(\mathbb{D})}}= \int_{0}^T \overline{K_z(\theta(t))} dt = \sum_{n=0}^\infty \left( \int_0^T \overline{\theta(t)}^n dt \right) z^n.
\end{gather*}

\section{Densely Defined Liouville Operators over the Hardy Space}

As polynomials are dense inside of $H^2(\mathbb{D})$, the existence of densely defined Liouville operators over the Hardy space follows immediately. In particular, if $f$ is a polynomial, $A_f p(z)$ is a polynomial for every polynomial $p(z)$. The establishment of a broader class of densely defined Liouville operators can be obtained by following the classification of densely defined multiplication operators over the Hardy space, as was done in \cite{sarason2008unboundedtoeplitz}. In particular, \cite{sarason2008unboundedtoeplitz} appealed to the Smirnov class, $N^+ := \{ b/a : b, a \in H^\infty \text{ and } a \text{ outer} \},$ and the inner outer factorization of functions in $H^2(\mathbb{D})$ to obtain the following theorem.

\begin{theorem}[Sarason \cite{sarason2008unboundedtoeplitz}]\label{thm:sarason}
Let $f : \mathbb{D} \to \mathbb{C}$ be the symbol for the multiplication operator, $M_f : \mathcal{D}(M_{f}) \to H^2(\mathbb{D})$ with $\mathcal{D}(M_{f}) := \{ g \in H : fg \in H \}$. $\mathcal{D}(M_f)$ is dense in $H^2(\mathbb{D})$ iff $f \in N^+$ given as $f = b/a$ where $b,a \in H^\infty$ satisfy $|a|^2 + |b|^2 = 1$, $a$ outer and $\mathcal{D}(M_f) = aH^2(\mathbb{D})$.
\end{theorem}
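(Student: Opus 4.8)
The plan is to prove the two implications separately, the common analytic ingredient being the Smirnov maximum principle (a function in $N^+$ whose boundary values lie in $L^2(\mathbb{T})$ already belongs to $H^2(\mathbb{D})$) together with uniqueness of the canonical inner--outer factorization in the Nevanlinna class. I record also the elementary fact, used throughout, that $a$ outer implies $1/a\in N^+$ and that $aH^2(\mathbb{D})$ is dense in $H^2(\mathbb{D})$ (the latter being the very definition of outer adopted in the excerpt).

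For sufficiency and the identification of the domain, assume $f=b/a$ with $a,b\in H^\infty$, $a$ outer, and $|a|^2+|b|^2=1$ a.e.\ on $\mathbb{T}$. If $g=ah$ with $h\in H^2(\mathbb{D})$ then $fg=bh\in H^2(\mathbb{D})$ since $b\in H^\infty$, so $aH^2(\mathbb{D})\subseteq\mathcal{D}(M_f)$, and this alone forces $\mathcal{D}(M_f)$ to be dense since $a$ is outer. For the reverse inclusion, take $g\in\mathcal{D}(M_f)$, so $g,fg\in H^2(\mathbb{D})$; on the circle I would write
\[
\bar a\, g+\bar b\,(fg)\;=\;\frac{(|a|^2+|b|^2)\,g}{a}\;=\;\frac{g}{a},
\]
whose left-hand side exhibits $g/a$ as an element of $L^2(\mathbb{T})$. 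Since $g\in H^2\subseteq N^+$ and $1/a\in N^+$, we have $g/a\in N^+$, and the Smirnov maximum principle upgrades this to $g/a\in H^2(\mathbb{D})$, i.e.\ $g\in aH^2(\mathbb{D})$. Hence $\mathcal{D}(M_f)=aH^2(\mathbb{D})$.

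Next I would insert the normalization lemma: given any representation $f=b_0/a_0$ with $a_0,b_0\in H^\infty$ and $a_0$ outer, put $w:=|a_0|^2+|b_0|^2$ on $\mathbb{T}$; then $w$ is bounded and $\log w\in L^1(\mathbb{T})$, because $\log|a_0|^2\le\log w\le\log(\|a_0\|_\infty^2+\|b_0\|_\infty^2)$ and $\log|a_0|\in L^1$ as $a_0$ is outer. Taking $c$ to be the outer function with $|c|^2=w$, one gets $c\in H^\infty$ and $a:=a_0/c$, $b:=b_0/c$ in $H^\infty$, outer, with $b/a=f$ and $|a|^2+|b|^2=1$. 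For necessity, suppose $\mathcal{D}(M_f)$ is dense; choosing a nonzero $g\in\mathcal{D}(M_f)$ gives $f=(fg)/g$, a quotient of two $H^2(\mathbb{D})$ functions, so $f$ lies in the Nevanlinna class. Write its canonical factorization $f=(I_1/I_2)(O_1/O_2)$ with $I_1,I_2$ inner having trivial common inner factor and $O_1,O_2$ outer; I claim $I_2$ is constant, which is precisely the statement $f\in N^+$. Indeed, for every $h\in\mathcal{D}(M_f)$ we have $fh\in H^2(\mathbb{D})\subseteq N^+$, so the denominator inner factor of $fh$ is trivial; since that factor is $I_2$ and $I_2$ is coprime to $I_1$, the uniqueness of the canonical factorization forces $I_2$ to divide the inner part of $h$, i.e.\ $h\in I_2H^2(\mathbb{D})$. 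Thus $\mathcal{D}(M_f)\subseteq I_2H^2(\mathbb{D})$, a proper closed subspace unless $I_2$ is constant, contradicting density. So $f\in N^+$, the normalization lemma puts it in the stated form, and the first part gives $\mathcal{D}(M_f)=aH^2(\mathbb{D})$.

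I expect the main obstacle to be the necessity direction, specifically the step where density of $\mathcal{D}(M_f)$ excludes a nontrivial inner factor in the denominator of $f$: this rests on the fact that membership in $N^+$ is read off the canonical factorization and on the stability of $N^+$ under division by outer functions, which require genuine Nevanlinna/Smirnov theory rather than Hilbert-space manipulation, and the Smirnov maximum principle reappears here. By contrast the sufficiency inclusion, the $L^2$ boundary-value identity, and the normalization lemma are routine once that machinery is available.
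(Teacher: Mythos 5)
This theorem is not proved in the paper: it is quoted verbatim from Sarason with a citation, the only internal remark being that the reverse (sufficiency) implication is ``relatively trivial'' because $a$ outer is equivalent to $aH^2(\mathbb{D})$ being dense. So there is no in-paper proof to compare against; measured against Sarason's original argument, your proposal is essentially correct and follows the same route. The sufficiency half and the domain identification are right: $aH^2\subseteq\mathcal{D}(M_f)$ is immediate, and your boundary identity $\bar a g+\bar b(fg)=g/a$ (a linear variant of Sarason's $|g/a|^2=|g|^2+|fg|^2$) shows $g/a$ has $L^2(\mathbb{T})$ boundary values while lying in $N^+$, so the Smirnov maximum principle gives $g/a\in H^2$ and hence $\mathcal{D}(M_f)=aH^2$. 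The normalization lemma is fine, provided you note explicitly that $a_0/c$ and $b_0/c$ land in $H^\infty$ again by the Smirnov maximum principle (they are in $N^+$ with bounded boundary values) and that a quotient of outer functions lying in $N^+$ is outer. For necessity, your factorization argument works: $f=(fg)/g\in N$, and for $h\in\mathcal{D}(M_f)$ with inner part $I_h$, the uniqueness of the canonical factorization (outer parts with equal boundary moduli agree up to a unimodular constant, so the inner parts can be compared) together with coprimality of $I_1,I_2$ forces $I_2\mid I_h$; since $I_2H^2$ is closed (inner multiplication is isometric) and proper when $I_2$ is nonconstant, density kills $I_2$ and $f\in N^+$. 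The one point worth flagging is that $f$ holomorphic forces $I_2$ to be singular inner (no Blaschke factor can occur in the denominator), and that ``$\mathcal{N}$-outer'' functions are automatically in $N^+$ so that $f=I_1(O_1/O_2)\in N^+$ once $I_2$ is constant; these are standard Smirnov-class facts, but they are doing real work and deserve explicit citation rather than being absorbed into ``uniqueness of the canonical factorization.''
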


The reverse implication of Theorem \ref{thm:sarason} is relatively trivial and exploits the fact that a function $a \in H^\infty$ is outer iff $aH^2(\mathbb{D})$ is dense in $H^2(\mathbb{D})$. The approach to determining which symbols yield densely defined Liouville operators leverages outer functions while also combining the antiderivative operator, $J : H^2(\mathbb{D}) \to H^2(\mathbb{D})$, given as $Jh(z) := \int_0^z h(w) dw = \int_0^1 h(tz) dt.$

\begin{lemma}\label{lem:dense-J}
$\mathbb{C}+JaH^2$ is dense in $H^2$
\end{lemma}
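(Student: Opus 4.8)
The plan is to show that the orthogonal complement of $\mathbb{C} + JaH^2$ in $H^2$ is trivial. Suppose $h \in H^2$ is orthogonal to every element of $\mathbb{C} + JaH^2$. Since $h \perp \mathbb{C}$ (the constant functions), we get $\hat h(0) = \langle h, 1\rangle = 0$, so $h(z) = \sum_{n\ge 1} h_n z^n$. The remaining condition is $\langle h, J(ag)\rangle = 0$ for all $g \in H^2$. The idea is to move $J$ to the other side of the inner product: since $J$ shifts Taylor coefficients by $J(\sum c_n z^n) = \sum_{n\ge 0} \frac{c_n}{n+1} z^{n+1}$, its Hilbert-space adjoint $J^*$ acts on $h = \sum_{n\ge 1} h_n z^n$ by $J^* h = \sum_{n \ge 1} \frac{h_n}{n} z^{n-1}$ — that is, $J^* h = (D h')$ up to reindexing, or more precisely $J^* h(z) = \int_0^1 \frac{h'(sz)}{\,} \,$ — concretely $J^*h = \frac{1}{z}\big(h(z)-h(0)\big)$ composed appropriately; in any case $J^*h$ is a well-defined element of $H^2$ whenever $h \in H^2$ with $h(0)=0$, because $\sum |h_n/n|^2 \le \sum |h_n|^2 < \infty$. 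Then the condition becomes $\langle J^* h, ag\rangle = 0$ for all $g\in H^2$, i.e. $\langle M_a^* J^* h, g\rangle = 0$ for all $g$, i.e. $M_a^* J^* h = 0$, i.e. $J^* h \perp a H^2$.

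Next I would invoke the characterization of outer functions used repeatedly in the excerpt: since $a$ is outer, $aH^2$ is dense in $H^2$, so its orthogonal complement is $\{0\}$. Hence $J^* h = 0$, which forces $h_n/n = 0$ for all $n \ge 1$, i.e. $h = 0$. This gives $(\mathbb{C} + JaH^2)^\perp = \{0\}$, and therefore $\mathbb{C} + JaH^2$ is dense in $H^2$, as claimed.

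There is one gap in the argument above that needs care, and it is the main obstacle: passing from "$h \perp J(ag)$ for all $g$" to "$J^*h \perp ag$ for all $g$" requires $J^*$ to be the genuine Hilbert-adjoint of $J$, which is fine since $J$ is bounded on $H^2$ (indeed $\|Jh\| \le \|h\|$ as $|c_n/(n+1)| \le |c_n|$), so this step is legitimate. The subtler point is whether $a H^2$ being dense in $H^2$ really lets me conclude $M_a^* J^* h = 0$: this is immediate because $\langle M_a^* J^*h, g\rangle = \langle J^*h, ag\rangle = 0$ for all $g \in H^2$ says $M_a^* J^* h \perp a H^2$, and $M_a^* J^* h$ already lies in $H^2$, so density of $aH^2$ forces it to be $0$. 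Thus no real obstacle remains; the proof is essentially the two displayed reductions (strip off the constant, move $J$ across as $J^*$) followed by the outer-function density fact. I would write it compactly: let $h \in (\mathbb{C}+JaH^2)^\perp$, deduce $h(0) = 0$ and $\langle J^*h, ag\rangle_{H^2} = 0$ for all $g \in H^2$, conclude $J^*h \in (aH^2)^\perp = \{0\}$ by outerness of $a$, hence $h = 0$ since $h \mapsto J^*h$ is injective on functions vanishing at the origin.
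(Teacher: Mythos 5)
Your argument is correct, but it is a genuinely different route from the paper's. The paper proves density constructively: for a polynomial $p$, it approximates $\frac{d}{dz}p$ by some $ag\in aH^2$ (using that $a$ outer makes $aH^2$ dense), applies the contraction $J$ to get $\|J(\tfrac{d}{dz}p)-J(ag)\|<\epsilon$, and then uses that the range of $J$ is $zH^2$ together with density of polynomials to conclude $JaH^2$ is dense in $zH^2$, so $\mathbb{C}+JaH^2$ is dense in $H^2$. You instead run a duality argument: if $h\perp \mathbb{C}+JaH^2$ then $h(0)=0$ and, since $J$ is a bounded (indeed contractive) operator so that $J^*$ is its honest Hilbert adjoint, $\langle J^*h, ag\rangle=0$ for all $g$, whence $J^*h\in (aH^2)^\perp=\{0\}$ by outerness of $a$, and injectivity of $J^*$ on $\{h:h(0)=0\}$ (visible from $J^*h=\sum_{n\ge 1}\frac{h_n}{n}z^{n-1}$) gives $h=0$. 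Both proofs pivot on the same key fact, namely that $a$ outer is equivalent to $aH^2$ being dense; yours avoids the polynomial-approximation step and the codimension-one remark about the range of $J$, giving a shorter annihilator-style proof, while the paper's version is more explicit and quantitative. One small slip worth fixing in a write-up: your parenthetical identification of $J^*h$ with $\frac{1}{z}\bigl(h(z)-h(0)\bigr)$ is not correct (that is the backward shift, without the $\frac{1}{n}$ weights); the coefficient formula $J^*h=\sum_{n\ge 1}\frac{h_n}{n}z^{n-1}$, which is what your argument actually uses, is the right one, so the proof itself is unaffected.
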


\begin{proof}
Let $h \in H^2$ where $h(z) = \sum_{n=0}^\infty h_n z^n$ and $\sum_{n=0}^\infty |h_n|^2 < \infty$. Then $Jh(z) = \sum_{n=0}^\infty \frac{h_n}{n+1}z^{n+1},$ and as $\sum_{n=0}^\infty \left|\frac{h_n}{n+1}\right|^2 < \| h\|_{H^2}^2 < \infty$, we have $Jh \in H^2$. That is, $J:H^2 \to H^2$. Let $p$ be a polynomial and $\epsilon > 0$. The function $\frac{d}{dz} p$ is also a polynomial, and thus in $H^2(\mathbb{D})$. Hence, there is a function $a g \in aH^2$ such that $\| \frac{d}{dz} p - ag \|_{H^2} < \epsilon$.  Note that $J$ is a norm decreasing operator, and thus, $\| J p - J(ag) \|_{H^2} < \epsilon$. Finally, the range of $J$ is of co-dimension 1 with the Hardy space, and since polynomials are dense in $H^2(\mathbb{D})$, the above discussion established $JaH^2(\mathbb{D})$ as being dense in $zH^2(\mathbb{D})$. Thus, $\mathbb{C} + JaH^2(\mathbb{D})$ is dense inside of $H^2(\mathbb{D})$.
\end{proof}

\begin{proposition}
If $f \in N^+$ with representation $f=b/a$ as above, then $A_f$ is densely defined, with domain, $\mathcal{D}(A_f),$ containing the dense space $\mathbb{C}+J a H^2,$ where $J h(z) = \int_0^z h(w) dw$.
\end{proposition}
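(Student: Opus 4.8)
The plan is to show that the dense subspace $\mathbb{C} + JaH^2$ provided by Lemma~\ref{lem:dense-J} actually sits inside $\mathcal{D}(A_f)$; since we already know this subspace is dense, that containment finishes the proof immediately. So the real content is verifying that $A_f g \in H^2$ for every $g \in \mathbb{C} + JaH^2$.

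First I would dispose of the constant summand: if $g$ is constant then $\frac{d}{dz}g \equiv 0$, so $f \frac{d}{dz} g \equiv 0 \in H^2$ trivially. By linearity of $A_f$ it then suffices to handle $g = J(ah)$ for an arbitrary $h \in H^2$. The crucial observation is the fundamental theorem of calculus identity $\frac{d}{dz} J(ah) = ah$, which is exactly why the antiderivative operator $J$ was introduced. Therefore
\begin{equation*}
A_f g = f \cdot \frac{d}{dz} J(ah) = f \cdot a h = \frac{b}{a} \cdot a h = b h.
\end{equation*}
Now $b \in H^\infty$ and $h \in H^2$, so $bh \in H^2$ because $H^\infty$ consists precisely of the bounded multipliers of $H^2$ (with $\|bh\|_{H^2} \le \|b\|_\infty \|h\|_{H^2}$). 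Hence $J(ah) \in \mathcal{D}(A_f)$, and combining with the constants, $\mathbb{C} + JaH^2 \subseteq \mathcal{D}(A_f)$.

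The one place that needs a little care — and what I expect to be the main (though modest) obstacle — is the manipulation $\frac{b}{a}\cdot ah = bh$: a priori $a$ may vanish on a set of measure zero on $\mathbb{T}$ (an outer function need not be bounded away from zero), so the cancellation must be justified pointwise on $\mathbb{D}$ where $a$ has only isolated zeros, and then extended to the boundary via the fact that $bh$ and $\frac{b}{a}\cdot ah$ agree as analytic functions on $\mathbb{D}$ and $bh \in H^2$ already has well-defined radial limits. Equivalently, one argues entirely inside the disc: $ah$ and $f\cdot ah = bh$ are both genuine analytic functions on $\mathbb{D}$, the antiderivative $J(ah)$ is analytic there with derivative $ah$, so $f\frac{d}{dz}J(ah) = bh$ is an honest element of $H^2$, with no boundary subtlety at all. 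Once that identification is clean, nothing else is needed: density comes for free from Lemma~\ref{lem:dense-J}, and we may additionally remark that since $Jh(z) = \int_0^1 h(tz)\,dt$ shows $J$ maps $H^2$ to $zH^2 \subseteq H^2$ boundedly (indeed norm-decreasingly), every step stays within $H^2$.
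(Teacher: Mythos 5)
Your argument is correct and is essentially identical to the paper's proof: write $g = c + J(ah)$, use $\frac{d}{dz}J(ah) = ah$ to get $f\frac{d}{dz}g = \frac{b}{a}\,ah = bh \in H^2$, and invoke Lemma~\ref{lem:dense-J} for density. The extra care you take with the cancellation $\frac{b}{a}\cdot ah = bh$ (working with analytic functions on $\mathbb{D}$, where it is immediate) is a fine observation but not a point the paper found necessary to address.
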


\begin{proof}
From Lemma \ref{lem:dense-J}, $\mathbb{C}+JaH^2 \subset H^2$. Moreover, if $g \in \mathbb{C}+JaH^2$, then $g(z) = c + J(ah)$ for some $h \in H^2$ and $c \in \mathbb{C}$. Observe that $\frac{d}{dz} g(z) = ah$, and $f\frac{d}{dz} g(z) = fah = \frac{b}{a} ah = bh \in H^2.$ Therefore $\mathbb{C}+JaH^2 \subset \mathcal{D}(A_f)$.
\end{proof}

\begin{proposition}
If $f$ is the symbol for a densely defined operator $A_f$, then $f$ is analytic, and $f = \frac{b}{a \frac{d}{dz} \Phi}$ where $b \in H^2$, $a \in H^2$, $a$ outer, and $\Phi$ a function in BMOA.
\end{proposition}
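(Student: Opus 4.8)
The plan is to reduce everything to the following simple observation: \emph{if we can exhibit one nonconstant $g\in\mathcal D(A_f)$ that lies in $\mathrm{BMOA}$, the proposition is immediate} — take $\Phi:=g$, $a:=1$ (which is outer) and $b:=f\frac{d}{dz}g\in H^2$, so that $\frac{b}{a\frac{d}{dz}\Phi}=\frac{fg'}{g'}=f$ (and if $f\equiv 0$ take $b=0$). Thus the work splits into (i) proving that $f$ is analytic and (ii) producing a nonconstant element of $\mathcal D(A_f)\cap\mathrm{BMOA}$.

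For (i): for each $w\in\mathbb D$ the functional $g\mapsto g'(w)$ is bounded and nonzero on $H^2$, so its kernel is a closed hyperplane; as $\mathcal D(A_f)$ is dense it cannot be contained in this hyperplane, so there is $g_w\in\mathcal D(A_f)$ with $g_w'(w)\neq 0$. On a neighborhood of $w$ the function $g_w'$ is analytic and nonvanishing while $fg_w'\in H^2$ is analytic, hence $f=(fg_w')/g_w'$ is analytic near $w$; letting $w$ range over $\mathbb D$ shows $f$ is analytic on $\mathbb D$.

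For (ii): the workhorse is the fact already used in Lemma~\ref{lem:dense-J} — by Cauchy--Schwarz $J\psi=\int_0^z\psi$ has absolutely summable Taylor coefficients for every $\psi\in H^2$, so $J(H^2)\subseteq H^\infty\subseteq\mathrm{BMOA}$. Pick any nonconstant $g_1\in\mathcal D(A_f)$ and set $h_1:=fg_1'\in H^2$, so $f=h_1/g_1'$. If $g_1'$ lies in the Nevanlinna class $N$ — which in particular always holds when $f\in N$, since then $g_1'=h_1/f$ is a quotient of Nevanlinna functions — then one can strip the inner factor off the denominator of $g_1'$ and use the Smirnov factorization to choose $u\in H^\infty\setminus\{0\}$ with $g_1'u\in H^\infty$; putting $g:=J(g_1'u)$ yields a nonconstant function in the Wiener algebra, hence in $\mathrm{BMOA}$, with $fg'=h_1u\in H^2$, i.e.\ $g\in\mathcal D(A_f)\cap\mathrm{BMOA}$, and the first paragraph finishes the proof.

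The remaining alternative is the genuinely hard one: that every nonconstant $g\in\mathcal D(A_f)$ has $g'\notin N$, equivalently $f\notin N$. I expect this to be the main obstacle, and the plan is to show it is incompatible with $\mathcal D(A_f)$ being dense. Reducing (by dividing out a Blaschke factor) to the case $f$ nonvanishing, one has $\mathcal D(A_f)=\mathbb C+J\bigl(\{h/f:h\in\mathcal E\}\bigr)$ with $\mathcal E:=\{h\in H^2:J(h/f)\in H^2\}$; an integration-by-parts identity together with the $\mathrm{BMOA}$-boundedness of the paraproduct $w\mapsto\int_0^z u'w$ shows $\mathcal E$ is an $H^\infty$-submodule of $H^2$, and density of $\mathcal D(A_f)$ should force $\overline{\mathcal E}=H^2$. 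One then argues, in the spirit of Sarason's treatment of $\mathcal D(M_f)$ in Theorem~\ref{thm:sarason} but passed through the antiderivative $J$, that a dense $H^\infty$-module $\mathcal E$ must contain some nonzero $h$ with $h/f\in H^2$ — that is, $\mathcal D(M_f)\neq\{0\}$ — whence $\psi:=h/f$ gives $g:=J\psi\in\mathcal D(A_f)\cap\mathrm{BMOA}$ as in (ii). The delicate point, and the one I would budget the most effort for, is precisely this last extraction: upgrading "$J(h/f)\in H^2$ for a dense family of $h$" to the existence of a single $h$ with $h/f$ itself in $H^2$, i.e.\ controlling $f\psi$ in $L^2(\mathbb T)$ rather than only after one integration.
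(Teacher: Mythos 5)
Your part (i) is correct and is essentially the paper's own argument for analyticity (you even supply the closed-hyperplane justification that the paper leaves implicit). The factorization half, however, has a genuine gap, and it stems from an unnecessary self-imposed constraint: by reducing the problem to exhibiting a nonconstant $g\in\mathcal{D}(A_f)\cap\mathrm{BMOA}$ you are in effect demanding the representation $f=b/(a\,\frac{d}{dz}\Phi)$ with $a\equiv 1$, which is strictly stronger than what the proposition asserts, and it is exactly this over-strengthening that produces your ``genuinely hard'' remaining case. You never close that case: the final paragraph is a plan (``should force,'' ``the delicate point \dots I would budget the most effort for''), not a proof, so as written your argument only covers symbols for which some nonconstant $g\in\mathcal{D}(A_f)$ has $g'$ in the Nevanlinna class.

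The paper disposes of the factorization with a single citation. Cohn's factorization theorem \cite{cohn1999factorization} says that the derivative of \emph{any} $H^2$ function factors as $g'=a\,\Phi'$ with $a\in H^2$ outer and $\Phi\in\mathrm{BMOA}$. So one picks any nonconstant $g\in\mathcal{D}(A_f)$ (which exists by density), sets $b:=fg'\in H^2$ by definition of the domain, and writes $f=\frac{b}{a\,\frac{d}{dz}\Phi}$. The outer factor $a$ is precisely the degree of freedom your reduction discards; restoring it via Cohn's theorem dissolves the case analysis on whether $g'\in N$ and turns the whole second half into a two-line deduction. To salvage your route without that citation you would essentially have to reprove Cohn's factorization (or show every dense Liouville domain contains a nonconstant BMOA primitive), which is harder than the proposition itself.
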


\begin{proof}Suppose that $\mathcal{D}(A_f)$ is dense inside of $H^2$. Select a nonconstant function $g \in \mathcal{D}(A_f)$. The derivative of $g$, $\frac{d}{dz} g$, is analytic, and hence, so is $h(z) := f(z) \frac{d}{dz} g(z)$. Therefore, $f(z) = \frac{h(z)}{\frac{d}{dz}g(z)}$ is analytic wherever the derivative of $g$ is nonvanishing. By density, for each $z_0 \in \mathbb{C}$ there is a corresponding $g$ with nonvanishing derivative at $z_0$ in $\mathcal{D}(A_f)$. Hence, $f$ is analytic on $\mathbb{D}$.
By \cite{cohn1999factorization}, $\frac{d}{dz} g = a \frac{d}{dz} \Phi$ where $a \in H^2$ and outer, and $\Phi$ is BMOA. The theorem follows.
\end{proof}

\section{Adjoints of Liouville Operators in the Hardy Space}

\begin{proposition}\label{prop:derivative-adjoint}Let $g^{[j]}_w(z) := \frac{d^j}{d\bar w^j} \left(\frac{1}{1-\bar w z}\right)$, then for all $j \in \mathbb{N}$, $g^{[j-1]}_w \in \mathcal{D}(A_f^*)$, and
\begin{equation}\label{eq:adjoint-on-kernel}A_f^* g^{[j-1]}_w = \sum_{\ell = 0}^{j-1} \overline{f^{(\ell)}(w)} g^{[j-\ell]}_w.\end{equation}
\end{proposition}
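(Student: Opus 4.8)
The plan is to recognise that $g^{[j]}_w$ is exactly the Riesz representer in $H^2$ of the bounded functional $g\mapsto g^{(j)}(w)$, and then to evaluate $\langle A_f g,\, g^{[j-1]}_w\rangle_{H^2}$ for $g\in\mathcal D(A_f)$ by means of the Leibniz rule. First I would record the reproducing identity for the $g^{[j]}_w$: writing $g(z)=\sum_{n\ge 0}g_n z^n$ and $K_w(z)=\tfrac{1}{1-\bar w z}=\sum_{n\ge 0}\bar w^{\,n}z^n$, term-by-term differentiation gives $g^{[j]}_w(z)=\sum_{n\ge j}\tfrac{n!}{(n-j)!}\,\bar w^{\,n-j}z^n$ (so $g^{[0]}_w=K_w$); since $|w|<1$ the coefficients $\tfrac{n!}{(n-j)!}|w|^{\,n-j}$ are square-summable, hence $g^{[j]}_w\in H^2$, and $\langle g, g^{[j]}_w\rangle_{H^2}=\sum_{n\ge j}\tfrac{n!}{(n-j)!}\,g_n w^{\,n-j}=g^{(j)}(w)$ for every $g\in H^2$. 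In particular each $g^{[j]}_w$ does represent a bounded functional.

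Now fix $j\ge 1$ and $w\in\mathbb D$, and take $g\in\mathcal D(A_f)$. Since $A_f^*$ is under consideration, $A_f$ is densely defined, so as established earlier $f$ is analytic on $\mathbb D$, hence infinitely differentiable at $w$. Because $A_f g=f g'\in H^2$, the reproducing identity applies to it, and combining it with the general Leibniz rule for the analytic functions $f$ and $g'$,
\begin{align*}
\langle A_f g,\, g^{[j-1]}_w\rangle_{H^2}
&=\langle f g',\, g^{[j-1]}_w\rangle_{H^2}=(f g')^{(j-1)}(w)
 =\sum_{\ell=0}^{j-1}\binom{j-1}{\ell}f^{(\ell)}(w)\,g^{(j-\ell)}(w)\\
&=\sum_{\ell=0}^{j-1}\binom{j-1}{\ell}f^{(\ell)}(w)\,\langle g,\, g^{[j-\ell]}_w\rangle_{H^2}
 =\Bigl\langle g,\ \sum_{\ell=0}^{j-1}\binom{j-1}{\ell}\,\overline{f^{(\ell)}(w)}\; g^{[j-\ell]}_w\Bigr\rangle_{H^2}.
\end{align*}
The final expression is $\langle g, v\rangle_{H^2}$ for the fixed vector $v=\sum_{\ell=0}^{j-1}\binom{j-1}{\ell}\overline{f^{(\ell)}(w)}\,g^{[j-\ell]}_w\in H^2$, so $g\mapsto\langle A_f g, g^{[j-1]}_w\rangle_{H^2}$ is bounded on $\mathcal D(A_f)$; hence $g^{[j-1]}_w\in\mathcal D(A_f^*)$, and since $\mathcal D(A_f)$ is dense the defining property of the adjoint forces $A_f^* g^{[j-1]}_w=v$, which is \eqref{eq:adjoint-on-kernel} with the Leibniz coefficients $\binom{j-1}{\ell}$ attached to the $\ell$-th summand (these are all $1$ only when $j\le 2$).

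The step I expect to be the real obstacle is the membership $g^{[j-1]}_w\in\mathcal D(A_f^*)$, not the identity itself: the crude bound $|\langle A_f g, g^{[j-1]}_w\rangle|\le\|A_f g\|\,\|g^{[j-1]}_w\|$ is worthless because $A_f$ is unbounded, so one cannot read boundedness of the functional off Cauchy--Schwarz. Rewriting $\langle f g', g^{[j-1]}_w\rangle$ as $(f g')^{(j-1)}(w)$ and expanding by Leibniz is precisely the device that makes $g$ enter only through the bounded functionals $g\mapsto g^{(j-\ell)}(w)$, with the finitely many constants $f^{(\ell)}(w)$ — finite exactly because $f$ is analytic on $\mathbb D$ — carrying the rest. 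An equivalent, more computational route is induction on $j$: the base case $A_f^*K_w=\overline{f(w)}\,g^{[1]}_w$ is immediate from the reproducing identity, and one passes from $j$ to $j+1$ by differentiating \eqref{eq:adjoint-on-kernel} in $\bar w$ and invoking the closedness of $A_f^*$ (valid since $A_f$ is closed and densely defined) to commute $A_f^*$ past $\tfrac{d}{d\bar w}$; this reproduces the Pascal recursion $\binom{j}{\ell}=\binom{j-1}{\ell-1}+\binom{j-1}{\ell}$ for the coefficients.
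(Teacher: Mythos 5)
Your argument is correct and is essentially the paper's own proof: identify $g^{[j]}_w$ as the Riesz representer of $g\mapsto g^{(j)}(w)$, rewrite $\langle A_f g,\, g^{[j-1]}_w\rangle_{H^2}=(fg')^{(j-1)}(w)$, expand by Leibniz so that $g$ enters only through the bounded point--derivative functionals, and read off membership in $\mathcal{D}(A_f^*)$ together with the adjoint formula. You are also right about the coefficients: the Leibniz expansion produces the factors $\binom{j-1}{\ell}$, which the displayed identity \eqref{eq:adjoint-on-kernel} (and the paper's proof, which writes $\sum_{\ell=0}^{j-1} h^{(j-\ell)}(w) f^{(\ell)}(w)$ without them) omits; the two versions agree only for $j\le 2$, so your corrected statement is the one that actually follows from the computation. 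Note that the later corollary on the zero eigenspace of $A_f^*$ is unaffected by this correction, since there every $f^{(\ell)}(z_i)$ appearing in the sum vanishes regardless of the binomial coefficients.
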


\begin{proof}

Let $w \in \mathbb{D}$ and set \[g^{[j]}_w(z) = \frac{d^j}{d\bar w^j} \left(\frac{1}{1-\bar w z}\right) = \sum_{n=j}^\infty \frac{n!}{(n-j)!} z^n \bar{w}^{n-j},\] then $g^{[j]}_w \in H^2$ and $\langle h, g^{[j]}_w\rangle_{H^2}=h^{(j)}(w)$.

Suppose $h \in \mathcal{D}(A_f)$, and consider \[ \langle A_f h, g^{[j-1]}_w \rangle_{H^2} = \langle h'\cdot f, g^{[j-1]}_w \rangle_{H^2} = \sum_{\ell = 0}^{j-1} h^{(j-\ell)}(w) f^{(\ell)}(w) = \left\langle h, \sum_{\ell = 0}^{j-1} \overline{f^{(\ell)}(w)} g^{[j-\ell]}_w \right\rangle_{H^2}.\] Thus, $g^{[j-1]}_w \in \mathcal{D}(A_f^*)$ for all $j \in \mathbb{N}$ and $w \in \mathbb{D}$, and 
\[ A_f^* g^{[j-1]}_w = \sum_{\ell = 0}^{j-1} \overline{f^{(\ell)}(w)} g^{[j-\ell]}_w. \]
\end{proof}

\begin{proposition}
Suppose that $\gamma:[0,T] \to \mathbb{D}$ is a continuous trajectory satisfying $\frac{d}{dt} \gamma(t) = f(\gamma(t))$, then $A_f^* \Gamma_{\gamma} = k_{\gamma(T)} - k_{\gamma(0)}.$ More generally, suppose that $\theta : [0,T] \to \mathbb{D}$ is also a continuous trajectory, then $A_f^* \Gamma_\theta(z) = \int_0^T \overline{ f(\theta(t)) g^{[1]}_{z}(\theta(t))} dt.$
\end{proposition}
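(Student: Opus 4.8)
The plan is to obtain both identities from a single computation of the sesquilinear form $h \mapsto \langle A_f h, \Gamma_\theta\rangle_{H^2}$ on $\mathcal{D}(A_f)$ and then read off $A_f^*\Gamma_\theta$. By the defining property of the occupation kernel,
\[
\langle A_f h, \Gamma_\theta\rangle_{H^2} \;=\; \int_0^T (A_f h)(\theta(t))\,dt \;=\; \int_0^T f(\theta(t))\,h'(\theta(t))\,dt .
\]
When $\theta=\gamma$ solves $\dot\gamma = f(\gamma)$ the integrand is the total derivative $\frac{d}{dt}\,h(\gamma(t)) = h'(\gamma(t))\dot\gamma(t) = f(\gamma(t))h'(\gamma(t))$, so the fundamental theorem of calculus gives $h(\gamma(T)) - h(\gamma(0)) = \langle h,\,K_{\gamma(T)} - K_{\gamma(0)}\rangle_{H^2}$; Cauchy--Schwarz on the right then shows the functional is bounded on $\mathcal{D}(A_f)$, whence $\Gamma_\gamma \in \mathcal{D}(A_f^*)$ and $A_f^*\Gamma_\gamma = K_{\gamma(T)} - K_{\gamma(0)}$ by definition of the adjoint --- exactly the mechanism behind \eqref{eq:occupationkernelrelation} and Theorem~\ref{thm:occupationkernel_weightedcomp}.

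For a general continuous $\theta$ this telescoping is unavailable, so I would instead feed $h'(\theta(t))$ into the derivative-reproducing kernels $g_w^{[1]}$ of Proposition~\ref{prop:derivative-adjoint}. Since $h'(\theta(t)) = \langle h, g_{\theta(t)}^{[1]}\rangle_{H^2}$,
\[
\langle A_f h, \Gamma_\theta\rangle_{H^2} \;=\; \int_0^T f(\theta(t))\,\big\langle h, g_{\theta(t)}^{[1]}\big\rangle_{H^2}\,dt \;=\; \Big\langle h,\ \int_0^T \overline{f(\theta(t))}\;g_{\theta(t)}^{[1]}\,dt \Big\rangle_{H^2},
\]
and evaluating the right-hand $H^2$ function at a point $z\in\mathbb{D}$ via $g_{\theta(t)}^{[1]}(z) = \langle g_{\theta(t)}^{[1]}, K_z\rangle_{H^2}$ produces the asserted pointwise formula. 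To legitimize this I would note that $\theta([0,T])$ is a compact subset of $\mathbb{D}$, so $|\theta(t)|\le r<1$ on $[0,T]$; then $t\mapsto g_{\theta(t)}^{[1]}$ is a continuous $H^2$-valued map with $\sup_t\|g_{\theta(t)}^{[1]}\|_{H^2}<\infty$, and $f$ --- which is analytic on $\mathbb{D}$ whenever $A_f$ is densely defined --- is bounded on $\theta([0,T])$. Hence $t\mapsto \overline{f(\theta(t))}\,g_{\theta(t)}^{[1]}$ is Bochner integrable in $H^2$, its integral commutes with the bounded functional $\langle h,\cdot\rangle_{H^2}$, and the estimate $|\langle A_f h, \Gamma_\theta\rangle_{H^2}| \le \|h\|_{H^2}\int_0^T |f(\theta(t))|\,\|g_{\theta(t)}^{[1]}\|_{H^2}\,dt < \infty$ simultaneously certifies $\Gamma_\theta\in\mathcal{D}(A_f^*)$.

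The first identity is then the special case $\theta=\gamma$ of the second: differentiating $K_{\gamma(t)}(z) = (1-\overline{\gamma(t)}z)^{-1}$ in $t$ yields $\frac{d}{dt}K_{\gamma(t)}(z) = \overline{\dot\gamma(t)}\,g_{\gamma(t)}^{[1]}(z) = \overline{f(\gamma(t))}\,g_{\gamma(t)}^{[1]}(z)$, so the integrand in the pointwise formula is again a perfect $t$-derivative and integrates to $K_{\gamma(T)}(z) - K_{\gamma(0)}(z)$. The step I expect to be the real obstacle is the interchange in the second display --- pulling the scalar integral through the (conjugate-linear) inner product, equivalently letting the unbounded $A_f^*$ commute with $\int_0^T(\cdot)\,dt$. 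Handling this by staying with the scalar integral until the very end and invoking continuity of $t\mapsto g_{\theta(t)}^{[1]}$ in $H^2$ (so that the $H^2$-valued integrand is genuinely Bochner integrable) is what makes it clean, and the accompanying norm estimate is precisely what supplies $\Gamma_\theta\in\mathcal{D}(A_f^*)$ --- the one structural input that, unlike in the trajectory case, is not handed to us by a telescoping identity. Everything else is the reproducing property of $g_w^{[1]}$ from Proposition~\ref{prop:derivative-adjoint} together with routine bookkeeping.
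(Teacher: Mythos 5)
Your argument follows essentially the same route as the paper: for $\theta=\gamma$ the integrand is a perfect $t$-derivative, the fundamental theorem of calculus telescopes it, and Cauchy--Schwarz gives $\Gamma_\gamma\in\mathcal{D}(A_f^*)$ with $A_f^*\Gamma_\gamma=K_{\gamma(T)}-K_{\gamma(0)}$; for general $\theta$ one writes $h'(\theta(t))=\langle h,g^{[1]}_{\theta(t)}\rangle_{H^2}$, uses compactness of $\theta([0,T])\subset\mathbb{D}$ for boundedness, and reads off the adjoint. Your handling of the vector-valued step (Bochner integrability of $t\mapsto \overline{f(\theta(t))}\,g^{[1]}_{\theta(t)}$) is more careful than the paper's, which instead evaluates $\tilde h(z)=\overline{\langle k_z,\tilde h\rangle}$ and implicitly assumes $k_z\in\mathcal{D}(A_f)$; your route avoids that assumption.

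One point you should not gloss over: you claim your expression $\int_0^T\overline{f(\theta(t))}\,g^{[1]}_{\theta(t)}(z)\,dt$ ``produces the asserted pointwise formula'' $\int_0^T\overline{f(\theta(t))\,g^{[1]}_{z}(\theta(t))}\,dt$, but these are not the same object, because $g^{[1]}$ is not Hermitian symmetric: $\overline{g^{[1]}_z(w)}=\bar w/(1-z\bar w)^2$ whereas $g^{[1]}_w(z)=z/(1-\bar w z)^2$. In fact your expression is the correct value of $A_f^*\Gamma_\theta(z)$: for $f\equiv 1$ and $\theta\equiv c$ one has $\Gamma_\theta=T\,K_c$ and $A_f^*\Gamma_\theta=T\,g^{[1]}_c$, which agrees with your formula but not with the printed one; and carrying out the paper's own final step, $\tilde h(z)=\overline{\int_0^T f(\theta(t))\,k_z'(\theta(t))\,dt}$ with $k_z'(u)=\bar z/(1-\bar z u)^2$, again yields $\int_0^T\overline{f(\theta(t))}\,g^{[1]}_{\theta(t)}(z)\,dt$, i.e.\ $\int_0^T\overline{f(\theta(t))}\,\frac{d}{dz}K_{\theta(t)}(z)\,dt$. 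The statement as printed has the roles of $z$ and $\theta(t)$ in $g^{[1]}$ transposed. Your closing consistency check (specializing $\theta=\gamma$ and telescoping $\frac{d}{dt}K_{\gamma(t)}(z)=\overline{f(\gamma(t))}\,g^{[1]}_{\gamma(t)}(z)$) only works with your version, which is further evidence it is the right one; so state the result in that form rather than asserting agreement with the printed formula.
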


\begin{proof}
Let $g \in \mathcal{D}(A_f)$ and consider 
\begin{gather*}
\langle A_f g, \Gamma_{\gamma} \rangle_{H^2(\mathbb{D})} = \int_{0}^T f(\gamma(t)) g(\gamma(t)) dt\\
= \int_{0}^T \frac{d}{dt} g(\gamma(t)) dt = g(\gamma(T)) - g(\gamma(0)) = \langle g, k_{g(T)} - k_{g(0)} \rangle_{H^2(\mathbb{D})}.
\end{gather*}
Hence, $A_f^* \Gamma_{\gamma} = k_{g(T)} - k_{g(0)}.$

For the application of the adjoint on $\Gamma_{\theta}$, note that the functional \[g \mapsto \langle A_{f}^* g, \Gamma_{\theta} \rangle_{H^2(\mathbb{D})} = \int_0^T f(\theta(t))g'(\theta(t))dt\] is bounded as $f$ is continuous and the image of $\theta$ is compact within $\mathbb{D}$ and $g'(\theta(t)) = \langle g, g^{[1]}_{\theta(t)} \rangle_{H^2(\mathbb{D})}$. Hence, there is a function $\tilde h$ such that \[ \langle A_{f}^* g, \Gamma_{\theta} \rangle_{H^2(\mathbb{D})} = \int_0^T f(\theta(t))g'(\theta(t))dt = \langle g, \tilde h \rangle_{H^2(\mathbb{D})}, \] and $\tilde h = A_f^* \Gamma_{\theta}$. Moreover, \[\tilde h(z) = \langle \tilde h, k_z \rangle_{H^2(\mathbb{D})} = \overline{ \langle k_z, \tilde h \rangle_{H^2(\mathbb{D})}} = \int_0^T \overline{ f(\theta(t)) g^{[1]}_{z}(\theta(t))} dt.\]
\end{proof}

The above propositions establish the action of the adjoint on particular vectors related to kernel functions. Similar results can be worked out for any RKHS. However, the establishment of a general expression for the adjoint of the Liouville operator is much more involved, and a closed form solution is not expected to be able to be found for general RKHSs. The remainder of the section establishes a formula for the adjoint of the Liouville operator over the Hardy space, which nontrivially leverages the Hardy space's connection with $L^2(\mathbb{T})$ through radial limits.

\begin{theorem}\label{adjoint formula}
Let $f$ be the symbol for a densely defined Liouville operator over the Hardy space, and suppose that $h \in \mathcal{D}(A_f^*)$, then 
\[A_f^* h(z)=P_{H^2}\left(\overline{\frac{f(z)}{z}}\frac{d}{dz}(zh(z))-\overline{\frac{df}{dz}}h(z)\right).\]
\end{theorem}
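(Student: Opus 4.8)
The plan is to compute $\langle A_f g, h\rangle_{H^2}$ for $g \in \mathcal{D}(A_f)$ by passing to the $L^2(\mathbb{T})$ picture via radial limits, then move all operators off of $g$ and onto $h$, and finally reproject onto $H^2$. Writing $\langle A_f g, h \rangle_{H^2} = \langle f g', h\rangle_{L^2(\mathbb{T})} = \frac{1}{2\pi}\int_{\mathbb{T}} f(z) g'(z) \overline{h(z)}\, |dz|$, the obstruction is that $g'$ is not itself boundary data of a nice $H^2$ function in a way that lets us integrate by parts on the circle directly; the trick the theorem name suggests is to write $g'(z) = \frac{d}{dz}(zg(z)) - g(z)$ is wrong dimensionally, so instead one uses $z g'(z) = \frac{d}{dz}(zg(z)) - g(z)$, i.e. $g'(z) = \frac{1}{z}\left(\frac{d}{dz}(z g(z)) - g(z)\right)$, which is valid on $\mathbb{T}$ where $z$ is invertible. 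Substituting, $\langle A_f g, h\rangle = \frac{1}{2\pi}\int_{\mathbb{T}} \frac{f(z)}{z}\left(\frac{d}{dz}(zg(z)) - g(z)\right)\overline{h(z)}\,|dz|$.

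Next I would integrate by parts in the term containing $\frac{d}{dz}(zg(z))$. On $\mathbb{T}$, parametrizing $z = e^{i\theta}$, we have $\frac{d}{d\theta} = iz\frac{d}{dz}$, so $\frac{1}{2\pi}\int_{\mathbb{T}} \frac{f(z)}{z}\overline{h(z)}\,\frac{d}{dz}(zg(z))\,|dz| = \frac{1}{2\pi i}\int_0^{2\pi} f(e^{i\theta})\overline{h(e^{i\theta})}\,\frac{d}{d\theta}(e^{i\theta}g(e^{i\theta}))\,d\theta$ up to a bookkeeping factor, and integrating by parts (the boundary terms cancel by periodicity) moves $\frac{d}{d\theta}$ onto $f\overline{h}$. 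Converting back to $\frac{d}{dz}$ and $\frac{d}{d\bar z}$ using that $h$ is antiholomorphic on the boundary in the conjugated sense (so $\frac{d}{d\theta}\overline{h(e^{i\theta})} = -i\bar z\,\overline{h'(z)}$, whence $\overline{h'(z)}$ relates to $\overline{\frac{d}{dz}(zh)}$ the same way), one collects the terms: the derivative hits $f$ to give a $\overline{\frac{df}{dz}}$-type contribution (after conjugating appropriately and using $f$ holomorphic), it hits $\overline{h}$ to give the $\frac{d}{dz}(zh(z))$ contribution, and it hits the Jacobian factor $\frac{1}{z}$ coming from $|dz|$ versus $d\theta$. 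Carefully matching these against the claimed formula $\overline{\frac{f(z)}{z}}\frac{d}{dz}(zh(z)) - \overline{\frac{df}{dz}}h(z)$ is the computational heart of the argument.

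Finally, since the resulting expression is being paired against an arbitrary $g \in \mathcal{D}(A_f)$, which is dense in $H^2$, and we want $A_f^* h$, which by definition lies in $H^2$, we apply the orthogonal projection $P_{H^2}: L^2(\mathbb{T}) \to H^2$: the identity $\langle A_f g, h\rangle_{H^2} = \langle g, \Psi\rangle_{L^2(\mathbb{T})}$ for a computed $\Psi \in L^2(\mathbb{T})$ forces $A_f^* h = P_{H^2}\Psi$, because $\langle g, \Psi\rangle_{L^2(\mathbb{T})} = \langle g, P_{H^2}\Psi\rangle_{H^2}$ for $g \in H^2$. The main obstacle I anticipate is justifying the integration by parts and the manipulation of boundary values: one must ensure $f g'$, $g$, $h$ all have genuine radial limits in $L^2(\mathbb{T})$ (or at least that all the integrals converge absolutely) so that the formal boundary computation is legitimate; the hypothesis $h \in \mathcal{D}(A_f^*)$ together with $g \in \mathcal{D}(A_f)$ and $f \in N^+$ should supply exactly the integrability needed, but checking that the boundary terms in the integration by parts genuinely vanish — rather than contributing a spurious point-evaluation term — is the delicate point, and is presumably where the Smirnov-class structure of $f$ (bounded by an outer denominator) does the real work.
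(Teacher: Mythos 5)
Your proposal is essentially the paper's own proof: write $\langle A_f g, h\rangle_{H^2}$ as a boundary integral over the circle, trade $\frac{d}{dz}$ acting on $g$ for $\frac{d}{d\theta}$ via $\frac{d}{d\theta}=iz\frac{d}{dz}$, integrate by parts in $\theta$ (boundary terms vanish by $2\pi$-periodicity), collect terms into an $L^2(\mathbb{T})$ pairing, and conclude $A_f^*h=P_{H^2}(\,\cdot\,)$ from $\langle g,\Psi\rangle_{L^2(\mathbb{T})}=\langle g,P_{H^2}\Psi\rangle_{H^2}$ for $g\in H^2$. The only differences are cosmetic: the paper avoids your $zg'=\frac{d}{dz}(zg)-g$ detour by writing $g'=\frac{1}{iz}\frac{dg}{d\theta}$ directly, and it handles the boundary-value concerns you raise by carrying out the whole computation on circles of radius $r<1$ and letting $r\to1$; the bookkeeping you left unexecuted does indeed collect to the stated formula.
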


\begin{proof}
Suppose that $g \in \mathcal{D}(A_f)$ and $h \in \mathcal{D}(A_f^*)$, then
\begin{align*}
\ip{A_fg}{h}_{H^2}&=\lim_{r\rightarrow 1}\int_0^{2\pi} f(z)\frac{d}{dz}\left(g(z)\right)\overline{h(z)}\, d\theta\\
&=\lim_{r\rightarrow 1}\int_0^{2\pi} \left(\frac{1}{iz}\frac{dg}{d\theta}\right)f(z)\overline{h(z)}\, d\theta\\
&=\lim_{r\rightarrow 1}\int_0^{2\pi}\frac{dg}{d\theta}\left(\frac{1}{iz}f(z)\overline{h(z)}\right)\, d\theta\\
&=\lim_{r\rightarrow 1}\left[\frac{1}{iz}f(z)g(z)\overline{h(z)}\Bigg\vert^{2\pi}_0\right]-\lim_{r\rightarrow 1}\int_0^{2\pi}g(z)\frac{d}{d\theta}\left[
\frac{1}{iz}f(z)\overline{h(z)}\right]\, d\theta\\
&=-\lim_{r\rightarrow 1}\int_0^{2\pi}g(z)\left[
\frac{1}{iz}f(z)\frac{d\bar{h}}{d\theta}+\overline{h(z)}\frac{d}{d\theta}\left(\frac{f(z)}{iz}\right)\right]\, d\theta\\
&=-\lim_{r\rightarrow 1}\int_0^{2\pi}g(z)\left[
\frac{1}{iz}f(z)\left(-i\bar{z}\overline{\frac{dh}{dz}}\right)+\overline{h(z)}\left(\frac{df}{dz}-\frac{f(z)}{z}\right)\right]\, d\theta\\
&=\lim_{r\rightarrow 1}\int_0^{2\pi}g(z)\left[
\left(\frac{f(z)}{z}\right)\overline{\left(z\frac{dh}{dz}+h(z)\right)}-\overline{h(z)}\frac{df}{dz}\right]\, d\theta\\
&=\ip{g}{\overline{\frac{f(z)}{z}}\frac{d}{dz}(zh(z))-\overline{\frac{df}{dz}}h(z)}_{L^2(\mathbb{T})}\\
&=\ip{P_{H^2}g}{\overline{\frac{f(z)}{z}}\frac{d}{dz}(zh(z))-\overline{\frac{df}{dz}}h(z)}_{L^2(\mathbb{T})}\\
&=\ip{g}{P_{H^2}\left(\overline{\frac{f(z)}{z}}\frac{d}{dz}(zh(z))-\overline{\frac{df}{dz}}h(z)\right)}_{H^2}
\end{align*}
\end{proof}

\section{Spectrum of Liouville Operators}
The connection between Liouville operators and dynamical systems is realized through the eigenfunctions of the Liouville operator, where if $\phi$ is an eigenfunction of $A_f$ with eigenvalue $\lambda$ and $\gamma:[0,T] \to \mathbb{D}$ is a trajectory satisfying $\frac{d}{dt} \gamma(t) = f(\gamma(t))$, then $\frac{d}{dt} \phi(\gamma(t)) = \phi'(\gamma(t)) f(\gamma(t)) = A_f \phi(\gamma(t)) = \lambda \phi(\gamma(t)).$ Hence, $\phi(\gamma(t)) = \phi(\gamma(0)) e^{\lambda t}$ for all $t \in [0,T]$. This connection is leveraged in the study of DMD to provide data driven models of nonlinear dynamical systems \cite{rosenfelddmd}. The application of Liouville operators to the study of DMD for nonlinear dynamical systems motivates the further investigation of the spectrum of these operators.

In general, the spectrum of the Liouville operator is at least dependent on the properties of the symbol $f$. We start with a proposition which ties the spectrum to the existence of zeros in the disc. 
\begin{proposition}
  Let $f\in H^2(\mathbb{D})$ be a function with no zeros in a neighborhood of the closed disc. Then $\sigma(A_f)=\mathbb{C}$
\end{proposition}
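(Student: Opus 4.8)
The plan is to show that \emph{every} $\lambda \in \mathbb{C}$ is an eigenvalue of $A_f$; since eigenvalues belong to the spectrum of a closed operator (an eigenvector destroys injectivity of $A_f - \lambda I$, so $A_f-\lambda I$ cannot have a bounded inverse on $H^2$) and $\sigma(A_f) \subseteq \mathbb{C}$ trivially, this forces $\sigma(A_f) = \mathbb{C}$. The eigenvalue equation $A_f \phi = \lambda \phi$ is $f(z)\phi'(z) = \lambda\phi(z)$, a first order linear ODE whose formal solution is $\phi_\lambda(z) = \exp\!\left(\lambda \int_0^z \frac{dw}{f(w)}\right)$. The entire content of the proposition is verifying that this formal solution is a genuine \emph{nonzero} element of $H^2$ lying in $\mathcal{D}(A_f)$, and this is exactly where the zero-free hypothesis is used.

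First I would exploit the hypothesis: $f$ extends to an analytic, zero-free function on some open neighborhood of $\overline{\mathbb{D}}$, and by compactness of $\overline{\mathbb{D}}$ we may fix $\rho > 1$ so that $f$ is analytic and zero-free on the disc $\mathbb{D}_\rho = \{\,|z| < \rho\,\}$. Since $\mathbb{D}_\rho$ is simply connected and $1/f$ is holomorphic there, the function $F(z) := \int_0^z \frac{dw}{f(w)}$ is a well-defined holomorphic function on $\mathbb{D}_\rho$ with $F' = 1/f$. Being holomorphic on $\mathbb{D}_\rho \supset \overline{\mathbb{D}}$, $F$ is continuous on the compact set $\overline{\mathbb{D}}$, hence bounded there, say $|F| \le M$ on $\overline{\mathbb{D}}$.

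Next I would set $\phi_\lambda := \exp(\lambda F)$. Then $\phi_\lambda$ is holomorphic on $\mathbb{D}_\rho$, and $|\phi_\lambda| \le e^{|\lambda| M}$ on $\overline{\mathbb{D}}$, so $\phi_\lambda \in H^\infty(\mathbb{D}) \subseteq H^2(\mathbb{D})$; moreover $\phi_\lambda$ never vanishes and $\phi_\lambda(0) = 1$, so $\phi_\lambda$ is a nonzero vector of $H^2$. Differentiating gives $\phi_\lambda' = \lambda F' \phi_\lambda = \tfrac{\lambda}{f}\phi_\lambda$, hence $f \phi_\lambda' = \lambda \phi_\lambda \in H^2$; thus $\phi_\lambda \in \mathcal{D}(A_f)$ and $A_f \phi_\lambda = \lambda \phi_\lambda$. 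Since $\lambda \in \mathbb{C}$ was arbitrary, the point spectrum of $A_f$ is all of $\mathbb{C}$, and therefore $\sigma(A_f) = \mathbb{C}$.

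The only real subtlety — the "main obstacle", though it is mild — is the justification that $\phi_\lambda \in H^2$, which is precisely what the hypothesis that $f$ has no zeros on a \emph{neighborhood} of $\overline{\mathbb{D}}$ provides: it makes $1/f$ holomorphic (hence bounded) up to and slightly past the boundary, so that the antiderivative $F$, and hence $\phi_\lambda$, is bounded on $\overline{\mathbb{D}}$. If $f$ were merely zero-free on $\mathbb{D}$ one could still write down $\phi_\lambda$, but controlling its $H^2$ norm would require extra hypotheses on the boundary behaviour of $1/f$.
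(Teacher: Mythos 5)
Your proof is correct and follows essentially the same route as the paper: exhibit, for each $\lambda\in\mathbb{C}$, the eigenfunction $\exp\bigl(\lambda\int_0^z \frac{dw}{f(w)}\bigr)$, using the zero-free hypothesis to bound $1/f$ and hence keep the eigenfunction bounded, so that $A_f-\lambda$ fails to be injective. Your write-up is in fact somewhat more careful than the paper's (you verify membership in $\mathcal{D}(A_f)$ and nonvanishing explicitly), but the underlying argument is the same.
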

\begin{proof}
We will show that $(A_f-\lambda)$ is not an injective operator, i.e. there exists a non-zero $g(z)$ such that $(A_f-\lambda)g(z)=0$. This function is given by 
\[g(z)=C\exp\left(\int_0^z\frac{\lambda}{f(z)}dz\right)\]
where the above is the path integral from zero to $z$. Note, $\frac{\lambda}{f(z)}$ is bounded, hence the integral and ultimately $g(z)$ is bounded. 
\end{proof}

In the next sub-section we show that it is possible to get a spectrum which is \emph{not} the entire plane if the symbol is allowed to have zeros in the disc. 

\subsection{Symbols with zeros in $\mathbb{D}$}
\begin{lemma}\label{lem:Az-symmetric}
The operator $A_z$ is symmetric over $H^2(\mathbb{D})$.
\end{lemma}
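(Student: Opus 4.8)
The plan is to recognize $A_z$ as the classical number operator acting on Taylor coefficients, for which symmetry is immediate, and then to check that the domain condition legitimizes the termwise manipulation of inner products.

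First I would make the action of $A_z$ explicit. Writing $g(z) = \sum_{n=0}^\infty g_n z^n \in H^2$, one has $A_z g(z) = z g'(z) = \sum_{n=0}^\infty n g_n z^n$, so that $\mathcal{D}(A_z) = \{ g \in H^2 : \sum_{n=0}^\infty n^2 |g_n|^2 < \infty \}$. This set contains all polynomials and is therefore dense in $H^2(\mathbb{D})$; since $A_z$ is a Liouville operator it is automatically closed, and being densely defined it has a well-defined adjoint. In particular, to prove symmetry it suffices to verify $\langle A_z g, h \rangle_{H^2} = \langle g, A_z h \rangle_{H^2}$ for all $g, h \in \mathcal{D}(A_z)$.

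Next, fix $g, h \in \mathcal{D}(A_z)$ with $h(z) = \sum_n h_n z^n$. Since $n \le n^2$ for $n \ge 1$, the domain condition gives $\sum_n n|g_n|^2 < \infty$ and $\sum_n n|h_n|^2 < \infty$, and the Cauchy–Schwarz inequality then yields $\sum_n n|g_n||h_n| < \infty$. This absolute convergence is the only technical point in the argument, and it makes the following termwise identity rigorous rather than merely formal:
\[
\langle A_z g, h \rangle_{H^2} = \sum_{n=0}^\infty (n g_n)\overline{h_n} = \sum_{n=0}^\infty g_n \overline{(n h_n)} = \langle g, A_z h \rangle_{H^2},
\]
where we used that each $n$ is real. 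As $A_z g, A_z h \in H^2$ by the domain hypothesis, both inner products are genuine $H^2$ pairings, and the displayed equality establishes that $A_z$ is symmetric.

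There is no serious obstacle here; the lemma is essentially a bookkeeping computation, and the one thing to be careful about is exactly the absolute convergence above. As an independent consistency check, one may instead specialize Theorem \ref{adjoint formula} to $f(z) = z$: for $h \in \mathcal{D}(A_z^*)$ it gives $A_z^* h = P_{H^2}\big( \tfrac{d}{dz}(z h(z)) - h(z) \big) = P_{H^2}(z h'(z)) = z h'(z)$, which agrees with $A_z h$. Combined with the coefficient computation, which shows $\mathcal{D}(A_z^*) = \mathcal{D}(A_z)$, this in fact upgrades the conclusion to self-adjointness, although only symmetry is needed in the sequel.
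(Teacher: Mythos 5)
Your proof is correct and follows essentially the same route as the paper: expand $A_z g = zg'(z)$ in Taylor coefficients and move the real factor $n$ across the inner product, the paper's argument being exactly this termwise computation (your added justification of absolute convergence via Cauchy--Schwarz and the density remark are sound refinements rather than a different method). The closing consistency check with Theorem \ref{adjoint formula} and the claimed upgrade to self-adjointness go beyond what the lemma asserts and are not needed, but they do not affect the correctness of the symmetry argument.
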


\begin{proof}
Let $g(z) = \sum_{n=0}^\infty g_n z^n \in \mathbb{D}(A_z)$ and $h = \sum_{n=0}^\infty h_n z^n\in \mathbb{D}(A_z^*)$. Then 
\begin{gather*}\langle A_z g, h \rangle_{H^2} = \left \langle \sum_{n=1}^\infty n g_n z^n, \sum_{n=0}^\infty h_{n} z^n \right\rangle_{H^2}\\
= \sum_{n=1}^\infty n g_n \overline{h_n} = \sum_{n=1}^\infty  g_n \overline{n h_n}
= \langle g, A_z h \rangle_{H^2}.
\end{gather*}
\end{proof}

\begin{proposition}
If $A_z:D\subset H^2(\mathbb{D})\rightarrow H^2(\mathbb{D})$ is the densely defined Liouville operator given by $A_z(h(z))=zh'(z)$, then $\sigma(A_z)=\mathbb{N}$
\end{proposition}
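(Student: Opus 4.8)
The plan is to prove the two inclusions $\mathbb{N}\subseteq\sigma(A_z)$ and $\sigma(A_z)\subseteq\mathbb{N}$ separately, exploiting the fact that $A_z$ is diagonal with respect to the orthonormal basis $\{z^n\}_{n\ge 0}$ of $H^2(\mathbb{D})$. For the first inclusion I would simply observe that $A_z z^n = z\,\frac{d}{dz}z^n = n z^n$ for every integer $n\ge 0$, so each nonnegative integer is an eigenvalue of $A_z$; hence $\mathbb{N}\subseteq\sigma_p(A_z)\subseteq\sigma(A_z)$ (here $\mathbb{N}$ is understood to contain $0$, consistently with $A_z\mathbf 1 = 0$).

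For the reverse inclusion I would show that every $\lambda\in\mathbb{C}\setminus\mathbb{N}$ lies in the resolvent set. The natural candidate for the resolvent is the diagonal operator $R_\lambda$ determined on monomials by $R_\lambda z^n := (n-\lambda)^{-1}z^n$ and extended by linearity and continuity; equivalently, $R_\lambda\big(\sum_{n\ge 0} g_n z^n\big) = \sum_{n\ge 0}\frac{g_n}{n-\lambda}z^n$. Three points then need to be verified. First, $R_\lambda$ is bounded on $H^2$: since $\lambda\notin\mathbb{N}$ we have $n-\lambda\neq 0$ for all $n$, and because $|n-\lambda|\to\infty$ the quantity $c_\lambda := \inf_{n\ge 0}|n-\lambda|$ is strictly positive, giving $\|R_\lambda g\|_{H^2}\le c_\lambda^{-1}\|g\|_{H^2}$. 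Second, $R_\lambda$ maps $H^2$ into $\mathcal D(A_z)$: for $g=\sum g_n z^n\in H^2$ one has, termwise, $z\,\frac{d}{dz}(R_\lambda g)(z) = \sum_{n\ge 0}\frac{n}{n-\lambda}g_n z^n$, and since the sequence $n/(n-\lambda)$ is bounded this series again defines an element of $H^2$, so $R_\lambda g\in\mathcal D(A_z)$. Third, $R_\lambda$ is a genuine two-sided inverse of $A_z-\lambda$: working coefficientwise, $(A_z-\lambda)R_\lambda g = \sum_{n\ge 0}\frac{n-\lambda}{n-\lambda}g_n z^n = g$ for all $g\in H^2$, and for $h=\sum h_n z^n\in\mathcal D(A_z)$ (so that $A_z h = \sum n h_n z^n\in H^2$) one gets $R_\lambda(A_z-\lambda)h = \sum_{n\ge 0}\frac{(n-\lambda)h_n}{n-\lambda}z^n = h$. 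Thus $A_z-\lambda:\mathcal D(A_z)\to H^2$ is a bijection with bounded inverse $R_\lambda$, i.e. $\lambda\in\rho(A_z)$.

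Combining the two inclusions gives $\sigma(A_z)=\mathbb{N}$.

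I do not expect a serious obstacle in this argument; the only steps requiring genuine care are the bookkeeping around the unbounded domain — namely confirming that $R_\lambda$ actually lands in $\mathcal D(A_z)$ rather than merely in $H^2$ (which is where boundedness of $n/(n-\lambda)$ is used) and that the inverse identities, being purely diagonal, hold on all of $\mathcal D(A_z)$ without any density or closedness subtlety. The positivity of $c_\lambda$, which is what forces the spectrum to be no larger than $\mathbb{N}$, is the conceptual crux and follows immediately from $|n-\lambda|\to\infty$ together with $\lambda\notin\mathbb{N}$.
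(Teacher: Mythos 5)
Your proposal is correct and follows essentially the same route as the paper: both identify $\mathbb{N}$ as point spectrum via $A_z z^n = n z^n$ and invert $A_z-\lambda$ diagonally on Taylor coefficients, $h_n = g_n/(n-\lambda)$. The only differences are that the paper first invokes the symmetry of $A_z$ (Lemma \ref{lem:Az-symmetric}) to place the spectrum in $\mathbb{R}$ --- a step your uniform bound $\inf_n |n-\lambda|>0$ makes unnecessary --- and that you explicitly verify boundedness of the resolvent and that it maps $H^2$ into $\mathcal{D}(A_z)$, details the paper leaves implicit.
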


\begin{proof} Given that the symbol $f(z)=z$ makes the Liouville operator symmetric according to Lemma \ref{lem:Az-symmetric}, we have automatically that $\sigma(A_z)\subseteq \mathbb{R}$. By inspection we see for $n\in \mathbb{N}$ the functions $g_n(z)=z^n$ are eigen-functions with eigenvalues $n$. Moreover, we can see that these are all the points in the spectrum. 
Suppose there exists an $h\in H^2(\mathbb{D})$ such that 
 \[ (A_z - \lambda) h(z) = g(z) \] for a given $g \in H^2.$
Suppose that $h(z) = \sum_{n=0}^\infty h_n z^n$, then \[h'(z) = \sum_{n=0}^\infty h_{n+1}~(n+1)~z^n \quad\text{and}\quad zh(z) = \sum_{n=1}^\infty h_{n} n z^n.\]
If $g(z)=\sum_{n=0}^\infty g_nz^n$
 \[ -\lambda h_0 + \sum_{n=1}^\infty (nh_n - \lambda h_n)z^n = \sum_{n=0}^\infty g_n z^n.\]

 For $n \ge 0$ we have $h_n = \frac{g_n}{n-\lambda}$. Provided that $\lambda \not\in \mathbb{N}$, $\sum_{n=0}^\infty |h_n|^2 < \infty$. Which means that $\lambda \in \rho(A_f)$. 
\end{proof}
Given a non-real $\alpha\in \mathbb{C}$ the symbol $f(z)=\alpha z$ will not give rise to a symmetric operator. The next proposition shows that $f(z)=z$ is not the only symbol such that $\sigma(A_f)\neq \mathbb{C}$.

\begin{proposition}
If $f(z)=\alpha z+\beta$ for $\alpha,\beta \in \mathbb{C}$ with $|\beta/\alpha|<1$, then  $\sigma(A_f)=\{\alpha\cdot n\mid n\in \mathbb{N}\}$
\end{proposition}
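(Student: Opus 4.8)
The plan is to reduce the affine case $f(z)=\alpha z+\beta$ to the already-understood case $f(z)=z$ by an affine change of variables on the disc. The natural candidate is the Möbius-type (in fact affine) map $\psi(z) = \alpha z + \beta$ regarded as a change of coordinate, or more precisely its inverse; since $|\beta/\alpha| < 1$ we should check that $\psi$ (suitably normalized) maps $\mathbb{D}$ into itself or, failing that, work with the substitution $w = z + \beta/\alpha$ so that $f$ becomes $\alpha w$ in the $w$-coordinate. The key observation is that if $C_\psi g := g\circ\psi$ is the associated composition operator, then the chain rule gives $A_f (C_\psi g) = f\cdot (g'\circ\psi)\cdot\psi'$, and choosing $\psi$ to be the translation $\psi(w) = w - \beta/\alpha$ turns $f(z)\frac{d}{dz}$ into $\alpha w\frac{d}{dw}$ up to the composition, i.e. $C_\psi^{-1} A_f C_\psi = A_{\alpha w} = \alpha A_w$ as operators (on the appropriately transported domain). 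So the first step is to set up this intertwining carefully and verify it on a dense set such as $\mathbb{C} + JaH^2$ or the polynomials.

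Once the intertwining $A_f = C_\psi\,(\alpha A_w)\,C_\psi^{-1}$ is established on the relevant domains, similarity of operators preserves the spectrum, so $\sigma(A_f) = \alpha\,\sigma(A_w) = \{\alpha n : n\in\mathbb{N}\}$ by the preceding proposition. The second step is therefore purely bookkeeping: confirm that $C_\psi$ is a bounded invertible operator on $H^2(\mathbb{D})$ with bounded inverse, so that the similarity is genuine and not merely formal. This is where the hypothesis $|\beta/\alpha| < 1$ is used — it guarantees that the translation keeps the relevant point inside the disc and that the composition operator is well-defined and bounded on $H^2$; I would either cite the standard boundedness of composition operators on $H^2$ induced by analytic self-maps of $\mathbb{D}$, or, if $\psi$ is only an affine map of a disc onto a possibly larger/rotated disc, pass through an intermediate conformal normalization. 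Alternatively, and perhaps more cleanly, one can bypass composition operators entirely: directly exhibit the eigenfunctions and run the Neumann-series/resolvent computation as in the $f(z)=z$ proof. Writing $h(z)=\sum h_n z^n$ and expanding $(A_f - \lambda)h = g$ with $f(z)=\alpha z + \beta$ yields a lower-triangular (in fact bidiagonal) recursion $(\alpha n - \lambda)h_n + \beta(n+1)h_{n+1} = g_n$, which can be solved for $h_n$ whenever $\lambda\notin\{\alpha n\}$, and one checks $\ell^2$-summability of the solution using $|\beta/\alpha|<1$ to control the tail.

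I would lead with the change-of-variables argument because it is conceptually transparent and makes the appearance of $\{\alpha n\}$ immediate, relegating the recursion estimate to a remark or a backup. The eigenfunctions themselves are easy to name: solving $f\phi' = \lambda\phi$ gives $\phi(z) = (\alpha z + \beta)^{\lambda/\alpha}$, which for $\lambda = \alpha n$ is the polynomial $(\alpha z+\beta)^n$, and these lie in $H^2(\mathbb{D})$ and are genuine eigenfunctions with eigenvalue $\alpha n$; this handles the inclusion $\{\alpha n : n\in\mathbb{N}\}\subseteq\sigma(A_f)$ directly. The main obstacle I anticipate is the reverse inclusion together with the domain/closedness subtleties: one must ensure the transported domain under $C_\psi$ really is $\mathcal{D}(A_f)$ (not just a core), or equivalently that the resolvent estimate produces an $H^2$ function and not merely a formal power series — the factor $(n+1)$ in the bidiagonal recursion is the thing to watch, since it is exactly the kind of unbounded coefficient that threatens $\ell^2$-membership, and controlling it is where $|\beta/\alpha|<1$ does its real work via a geometric-decay argument on the recursion.
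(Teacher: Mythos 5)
Your lead argument has a genuine gap: the translation $\psi(w)=w\pm\beta/\alpha$ is not an analytic self-map of $\mathbb{D}$, so $C_\psi$ is not a bounded (or even well-defined) operator on $H^2(\mathbb{D})$ --- for $g\in H^2$ the function $g\circ\psi$ need not be defined on all of $\mathbb{D}$, since $\psi(\mathbb{D})\not\subseteq\mathbb{D}$ whenever $\beta\neq 0$. The hypothesis $|\beta/\alpha|<1$ places the fixed point $-\beta/\alpha$ of the vector field inside the disc; it does not make the translation a self-map, so the similarity $C_\psi^{-1}A_fC_\psi=\alpha A_z$ is only formal and cannot be used to transfer the spectrum. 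The suggested repair by a ``conformal normalization'' also fails to rescue the reduction: if you replace the translation by the disc automorphism $\psi_a$ with $a=-\beta/\alpha$, then $C_{\psi_a}$ is bounded and invertible, but the intertwined operator is the Liouville operator with symbol $\tilde f(w)=f(\psi_a^{-1}(w))\,\psi_a'(\psi_a^{-1}(w))$, a nonlinear rational function rather than $\alpha w$, so you are not reduced to the previously computed spectrum of $A_z$.

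Your backup route is the one that actually works, and it is essentially the paper's argument, except the paper runs it on the adjoint: using the adjoint formula one gets the forward recursion $h_0=-g_0/\lambda$, $h_n=(g_n-n\bar\beta h_{n-1})/(n\bar\alpha-\bar\lambda)$, whose coefficient ratio tends to $\bar\beta/\bar\alpha$ with modulus less than one, and then a diagonal-summation estimate shows the solution lies in $H^2$ with norm controlled by $\|g\|$; the points $\lambda=\alpha n$ are detected because the coefficient of $h_n$ vanishes there. If you insist on working with $A_f$ itself, note that your recursion $(\alpha n-\lambda)h_n+\beta(n+1)h_{n+1}=g_n$ cannot simply be ``solved for $h_n$'': solving forward, the homogeneous solutions grow like $(\alpha/\beta)^n$ (recall $|\alpha/\beta|>1$), so you must instead invert the upper-bidiagonal system backwards as a convergent series $h_n=\sum_{k\ge n}(-1)^{k-n}\bigl(\prod_{j=n}^{k-1}\beta(j+1)\bigr)\bigl(\prod_{j=n}^{k}(\alpha j-\lambda)\bigr)^{-1}g_k$ and then prove an $\ell^2$ estimate analogous to the paper's; this estimate, not the mere solvability of the recursion, is the real content. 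One genuinely useful piece of your proposal that the paper does not use: the explicit eigenfunctions $(\alpha z+\beta)^n\in H^2$ with $A_f(\alpha z+\beta)^n=\alpha n(\alpha z+\beta)^n$, which give the inclusion $\{\alpha n: n\in\mathbb{N}\}\subseteq\sigma(A_f)$ directly and more transparently than the paper's vanishing-coefficient argument for $A_f^*$.
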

\begin{proof}
In this instance we will compute the spectrum for adjoint for $A_f^*$ and note that $\sigma(A_f^*)=\overline{\sigma(A_f)}$. Suppose that $(A_f^*-\bar{\lambda})h(z)=g(z)$ for some $h,g\in H^2$. Suppose $h(z)=\sum_{n=0}^\infty h_nz^n$ and invoke the theorem \ref{adjoint formula}. We get, 
\begin{align}
\nonumber\sum_{n=1}^\infty \bar{\alpha}nh_nz^n+\sum_{n=2}^\infty\bar{\beta}(n-1)h_{n-1}z^n+\sum_{n=1}^\infty h_{n-1}z^n-\sum_{n=0}^\infty\bar{\lambda}h_nz^n&=\sum_{n=0}^\infty
g_nz^n\\
\label{eq:eigenvectorequation}-h_0\lambda +(\bar{\alpha}h_1+\bar{\beta}h_0-\bar{\lambda}h_1)z+\sum_{n=2}^\infty \left[(\bar{\alpha}n-\bar{\lambda})h_n+\bar{\beta}nh_{n-1}\right]z^n&=\sum_{n=0}^\infty
g_nz^n
\end{align}
From the above we get 
\[h_0=\frac{-g_0}{\lambda}, \quad\text{ and }\quad  h_{n}=\frac{g_n-n\bar{\beta}h_{n-1}}{(n\bar{\alpha}-\bar{\lambda})} \quad \text{for } n\geq 1.\]

Assume that $\lambda \neq \alpha n$ for any $n \in \mathbb{N}$. Write $h_n = d_n g_n + e_n h_{n-1}$. The sequence $e_n \to \frac{\bar \beta}{\bar \alpha}$. Hence, $\limsup |e_n| < 1$ and $d_n \to 0$. Without loss of generality, assume that $|e_n| < e < 1$ and $|d_n| < d < 1$ for all $n \ge 1$. Write $E_m := \prod_{n=1}^m e_n$.

Note that $h_1 = d_1 g_1 + e_1 h_0 = d_1 g_1 + e_1 \alpha g_0,$ $h_2 = d_2 g_2 + e_2 d_1 g_1 + E_2 \alpha g_0,$ $h_3 = d_3 g_3 + E_3/E_2 d_2 g_2 + E_3/E_1 d_1 g_1 + E_3 \alpha g_0,$
and and more generally,
\[h_n = d_n g_n + E_n/E_{n-1} d_{n-1} g_{n-1} + \cdots + E_{n}/E_1 d_1 g_1 + E_n/E_0 \alpha g_0.\]

The function $h(z) = \sum_{n=0}^\infty h_n z^n$ may be expressed as the sum of the following terms
\begin{align*}
    -g_0/\lambda    && E_1 \alpha g_0 z  && E_2 \alpha g_0 z^2    && E_3 \alpha g_0 z^3    && E_4 \alpha g_0 z^4    && \cdots\\
                    && d_1 g_1 z         && E_2/E_1 d_1 g_1 z^2   && E_3/E_1 d_1 g_1 z^3   && E_4/E_1 d_1 g_1 z^4   && \cdots\\
                    &&                   && d_2 g_2 z^2           && E_3/E_2 d_2 g_2 z^3   && E_4/E_2 d_2 g_2 z^4   && \cdots\\
                    &&                   &&                       && d_3 g_3 z^3           && E_4/E_3 d_3 g_3 z^4   && \cdots\\
                    &&                   &&                       &&                       && d_4 g_4 z^4           && \cdots.
\end{align*}
Define each sum along the diagonals as $z^iG_i(z)$. The norm of $G_i(z)$ is bounded above by the norm of $g$. Moreover, $\| G_i \| \le e^i \max(d,\alpha) \|g\|$. Hence, $h(z) = \sum_{i=0}^\infty z^i G_i(z)$, and $\| h \| \le \|g\| \max(d,\alpha,1/\lambda) \sum_{i=0}^\infty e^i = \| g\| \frac{\max(d,\alpha,1/\lambda)}{1-e} < \infty$. Thus, $h \in H^2$ as absolutely convergent series converge in Banach spaces.

Thus, as long $\lambda \neq \alpha n$ for some $n$, $A^*_{\alpha z + \beta} - \bar \lambda$ is invertible. If there exists $n \in \mathbb{N}$ such that $\lambda = \alpha n$, then the coefficient on $h_n$ in the left hand side of \eqref{eq:eigenvectorequation} is zero. Hence, $h_n$ is unconstrained an $A_{\alpha z + \beta}^* - \lambda$ is not invertible.
\end{proof}

\begin{lemma}
Let $m,j\in \mathbb{N}$ then, 
\[A_{z^m}^*(z^j)=
\left\{\begin{array}{llr}
0 & \ & j=0, \ldots, m-1\\
\ & &\\
(j-m+1)z^{j-m+1}&\ &  j\geq m
\end{array}
\right.\]
\end{lemma}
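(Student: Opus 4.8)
The plan is to compute $A_{z^m}^*$ on the monomials $z^j$ directly, using the characterization of the adjoint action on kernel-type functions rather than Theorem \ref{adjoint formula}, since the monomials are (up to normalization) exactly the building blocks $g^{[0]}_0, g^{[1]}_0, \ldots$ evaluated in a convenient basis. Concretely, I would test the proposed formula against an arbitrary $h = \sum_{n=0}^\infty h_n z^n \in \mathcal{D}(A_{z^m})$: since $A_{z^m} h(z) = z^m h'(z) = \sum_{n=1}^\infty n h_n z^{n+m-1}$, the inner product $\langle A_{z^m} h, z^j \rangle_{H^2}$ picks out the coefficient of $z^j$ in $z^m h'(z)$, which is $n h_n$ with $n + m - 1 = j$, i.e. $(j-m+1) h_{j-m+1}$ whenever $j \ge m-1$, and $0$ whenever $j < m-1$. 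Note that for $j = m-1$ the index is $n=0$, which contributes $0\cdot h_0 = 0$, so in fact $\langle A_{z^m} h, z^j\rangle_{H^2} = 0$ for all $j = 0, \ldots, m-1$.

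The next step is to rewrite this as $\langle h, \psi_j \rangle_{H^2}$ for an explicit $\psi_j \in H^2$ and identify $\psi_j = A_{z^m}^*(z^j)$. For $j \le m-1$ we get the zero functional, so $\psi_j = 0$, establishing the first case. For $j \ge m$, we have $\langle A_{z^m} h, z^j\rangle_{H^2} = (j - m + 1) h_{j-m+1} = \langle h, (j-m+1) z^{j-m+1}\rangle_{H^2}$, since $h_{j-m+1} = \langle h, z^{j-m+1}\rangle_{H^2}$ and $j - m + 1 \ge 1 > 0$ is a real (indeed positive integer) scalar that passes out of the conjugate-linear slot unchanged. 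This shows the functional $h \mapsto \langle A_{z^m} h, z^j \rangle_{H^2}$ is bounded (it is a single coordinate functional scaled by $j-m+1$), hence $z^j \in \mathcal{D}(A_{z^m}^*)$, and $A_{z^m}^*(z^j) = (j-m+1) z^{j-m+1}$, which is the second case.

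The only point requiring a little care — and the closest thing to an obstacle, though it is minor — is the boundary case $j = m-1$, where one must observe that the term "$n h_n$ with $n=0$" vanishes identically rather than being undefined, so that $z^{m-1}$ lands in the kernel of the adjoint together with $z^0, \ldots, z^{m-2}$; this is consistent with the shift-by-$(m-1)$ structure of $A_{z^m}$ (multiplication by $z^m$ followed by differentiation lowers degree by $m-1$ on the range side, but kills the would-be preimage of degree $m-1$). I would also remark that the computation only uses the density of polynomials in $H^2$ implicitly — the identity $\langle A_{z^m} h, z^j\rangle = \langle h, \psi_j\rangle$ is verified on all of $\mathcal{D}(A_{z^m})$ directly — so no closure argument is needed and the result holds as stated for every $m, j \in \mathbb{N}$.
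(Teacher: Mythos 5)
Your argument is correct, and it takes a genuinely different route from the paper. The paper's proof is a one-liner: it invokes the adjoint formula of Theorem \ref{adjoint formula}, i.e. $A_{z^m}^*h = P_{H^2}\bigl(\overline{z^{m-1}}\,\tfrac{d}{dz}(zh) - \overline{m z^{m-1}}\,h\bigr)$, which for $h = z^j$ gives $P_{H^2}\bigl((j+1-m)z^{j-m+1}\bigr)$ on the circle, with the projection killing the negative powers when $j < m-1$ and the coefficient itself vanishing at $j = m-1$. You instead verify the defining adjoint identity from first principles: for arbitrary $h = \sum h_n z^n \in \mathcal{D}(A_{z^m})$ you compute $\langle A_{z^m}h, z^j\rangle_{H^2} = (j-m+1)h_{j-m+1}$ for $j \ge m$ and $0$ for $j \le m-1$, rewrite this as $\langle h, (j-m+1)z^{j-m+1}\rangle_{H^2}$ (the scalar being a positive integer, so conjugation is harmless), and conclude both that $z^j \in \mathcal{D}(A_{z^m}^*)$ and the stated formula. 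What your approach buys is self-containment and explicitness: it does not depend on the boundary-value integration by parts behind Theorem \ref{adjoint formula}, it makes the boundedness of the functional and hence the domain membership $z^j \in \mathcal{D}(A_{z^m}^*)$ explicit (which the paper's one-line proof leaves implicit), and it handles the boundary case $j = m-1$ transparently. What the paper's route buys is brevity and uniformity, since the same projection formula dispatches all monomials at once; your parenthetical identification of monomials with the kernels $g^{[j]}_0$ is not actually used and could be dropped without loss.
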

\begin{proof}
Apply the adjoint formula.
\end{proof}
\begin{proposition}
For each $m > 1$, all $\lambda \in \mathbb{C}$ is an eigenvalue for $A_{z^m}^*$ with an eigenspace of dimension $m-1$.
\end{proposition}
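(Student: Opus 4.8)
The plan is to find all solutions $h(z) = \sum_{n=0}^\infty h_n z^n \in H^2$ of $(A_{z^m}^* - \bar\lambda) h = 0$ by turning the operator equation into a recurrence on the Taylor coefficients, then count the free parameters. First I would apply the previous lemma, which gives $A_{z^m}^*(z^j) = 0$ for $j = 0, \dots, m-1$ and $A_{z^m}^*(z^j) = (j - m + 1) z^{j-m+1}$ for $j \ge m$. Applying $A_{z^m}^*$ termwise to $h$ and reindexing, the equation $A_{z^m}^* h = \bar\lambda h$ becomes
\begin{equation*}
\sum_{j \ge m} (j - m + 1) h_j z^{j - m + 1} = \bar\lambda \sum_{n \ge 0} h_n z^n,
\end{equation*}
i.e. matching the coefficient of $z^n$: for $n = 0$ we get $0 = \bar\lambda h_0$, and for $n \ge 1$, setting $j = n + m - 1 \ge m$, we get $n\, h_{n+m-1} = \bar\lambda h_n$. (If $\lambda = 0$ the analysis is even simpler and the eigenspace is easily seen to be spanned by $1, z, \dots, z^{m-2}$, so assume $\lambda \neq 0$ below.)

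Next I would read off the structure of the recurrence. The relation $h_{n+m-1} = \frac{\bar\lambda}{n} h_n$ for $n \ge 1$ links indices in arithmetic progressions of step $m-1$: knowing $h_r$ for a residue class mod $(m-1)$ determines all later coefficients in that class. The constraint $\bar\lambda h_0 = 0$ forces $h_0 = 0$ (since $\lambda \neq 0$), so the chain starting at index $0$, namely $0, m-1, 2(m-1), \dots$, is identically zero. The chains starting at indices $1, 2, \dots, m-2$ are each seeded by a single free parameter $h_1, \dots, h_{m-2}$, giving $m-2$ free parameters; but I must also account for the chain through $h_{m-1}$: note $h_{m-1}$ is itself determined from $h_1$ via the $n=1$ relation ($h_{m} = \bar\lambda h_1$, wait—reindex carefully), so I should tabulate the residue classes modulo $m-1$ precisely. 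The residues $0, 1, \dots, m-2$ mod $(m-1)$ give $m-1$ chains; the residue-$0$ chain is killed by $h_0 = 0$; I need to check whether any additional coefficient (e.g. $h_{m-1}$, which has residue $0$) provides an independent free parameter — it does, because the recurrence $n\,h_{n+m-1} = \bar\lambda h_n$ only constrains $h_{n+m-1}$ in terms of $h_n$ for $n \ge 1$, and $h_{m-1}$ corresponds to $n+m-1 = m-1$, i.e. $n = 0$, which gives no constraint on $h_{m-1}$. So $h_{m-1}$ is a second free seed in the residue-$0$ class, and the total count is $(m - 2) + 1 = m - 1$ free parameters. I would organize this as: the free parameters are $h_1, h_2, \dots, h_{m-1}$ (note $h_0 = 0$), and every other coefficient is a determined multiple of one of these.

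Then I would verify convergence: starting from a seed $h_r$ with $1 \le r \le m-1$, the chain satisfies $h_{r + k(m-1)} = \bar\lambda^{\,?}/(\text{product of integers}) \cdot h_r$, where iterating $h_{n+m-1} = \frac{\bar\lambda}{n} h_n$ along the chain $r, r + (m-1), r + 2(m-1), \dots$ produces $h_{r + k(m-1)} = \bar\lambda^k \big/ \big(r \cdot (r + (m-1)) \cdots (r + (k-1)(m-1))\big) \cdot h_r$ (with $r$ replaced appropriately by the actual $n$-values in the recurrence), so the coefficients decay super-geometrically — the denominators grow factorially while $|\bar\lambda|^k$ grows only geometrically — hence $\sum |h_n|^2 < \infty$ and $h \in H^2$. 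This gives an $(m-1)$-dimensional space of genuine eigenvectors. Conversely, the derivation shows every eigenvector has this form, so the eigenspace has dimension exactly $m - 1$. The main obstacle is purely bookkeeping: getting the index shifts in the recurrence exactly right (the step is $m-1$, not $m$, which is easy to miscount) and correctly identifying that $h_{m-1}$ is an independent seed rather than a dependent coefficient — once the residue-class structure mod $(m-1)$ is laid out cleanly, the dimension count and the convergence estimate are both routine.
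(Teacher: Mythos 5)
Your argument for $\lambda \neq 0$ is correct and is essentially the paper's own proof: both reduce the eigenvalue equation to the coefficient recurrence $n\,h_{n+m-1} = \lambda h_n$ (with the $n=0$ equation forcing $h_0 = 0$), take $h_1,\dots,h_{m-1}$ as free seeds along arithmetic progressions of step $m-1$, and use the factorial growth of the denominators to conclude each resulting series lies in $H^2$; your chain formula $h_{r+k(m-1)} = \bar\lambda^k h_r \big/ \bigl(r\,(r+(m-1))\cdots(r+(k-1)(m-1))\bigr)$ is precisely the paper's eigenfunction $H_k$. (A shared minor rigor point: the termwise action of $A_{z^m}^*$ on a general $h\in\mathcal{D}(A_{z^m}^*)$ is cleanest to justify by pairing with monomials, $\langle A_{z^m}^* h, z^n\rangle = \langle h, A_{z^m} z^n\rangle = n\,h_{n+m-1}$, which both you and the paper leave implicit.)

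The one genuine slip is your parenthetical about $\lambda = 0$. There the recurrence only forces $h_j = 0$ for $j \geq m$, and by the preceding lemma $A_{z^m}^* z^j = 0$ for $j = 0,\dots,m-1$, so the kernel of $A_{z^m}^*$ is spanned by $1, z, \dots, z^{m-1}$ and has dimension $m$, not $m-1$; your claimed span $1,\dots,z^{m-2}$ is off by one. (This dimension $m$ also agrees with the paper's later corollary on zeros of the symbol, since $z^m$ has a zero of multiplicity $m$ at the origin.) So the honest conclusion is that the dimension count $m-1$ holds exactly for $\lambda \neq 0$, while $\lambda = 0$ is still an eigenvalue but with a larger eigenspace; the paper's proof quietly restricts to $\lambda \neq 0$ and never addresses this case, so apart from this parenthetical your argument matches it.
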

\begin{proof}
We will produce the eigenvectors using series methods. Applying the adjoint formula from above we have that for $f(z)=z^m$ and $h(z)=\sum_{n=0}^\infty h_nz^n$
we have that 
\[A_f^*h(z)=\sum_{n=0}^\infty nh_{n+m-1}z^n\]
Assume, $\lambda \neq 0$ and apply the eigenvector equation to $h(z)$. We get that \[\lambda h_n=nh_{n+m-1}\]
Thus, for $k\in \{1, \ldots, m-1\}$ and $j\in \mathbb{N}$

\[h(z)=\sum_{k=1}^{m-1}h_k\cdot\left[\sum_{n=0}^\infty \frac{\lambda^n}{\prod_{j=0}^{n-1}(k+j(m-1))}z^{k+n(m-1)}\right]\]
For $k\in \{1,\ldots, m-1\}$ and $m\in\mathbb{N}$ define 
\[H_k(z)=\sum_{n=0}^\infty \frac{\lambda^n}{\prod_{j=0}^{n-1}(k+j(m-1))}z^{k+n(m-1)}.\]
We now show that these are the eigen-functions for $A_f^*$ for $f(z)=z^m$. 
Let $\lambda\in \mathbb{C}$ and $k\in \{1,\ldots, m-1\}$ be fixed, and 
\[a_n=\frac{\lambda^{2n}}{\left[\prod_{j=0}^{n-1}(k+j(m-1))\right]^2}.\]
Note, 
\[\lim_{n\rightarrow \infty}\left|\frac{a_n}{a_{n+1}}\right|=\lim_{n\rightarrow \infty}\left|\frac{\lambda^2}{(k+n(m-1))^2}\right|=0.\] Since $\sum|a_n|^2<\infty$ we have that $\|H_k\|=\sum|a_n|^2\infty$ and $H_k\in H^2(\mathbb{D}).$ 
Since monomials are in the domain of the adjoint $A_{z^m}^*$ we have the following whenever $H_k(z)$ is in the domain of $A_{z^m}^*$. 
\begin{align*}
   A_{z^m}^*(H_k)(z)&=\sum_{n=0}^\infty \frac{\lambda^n}{\prod_{j=0}^{n-1}(k+j(m-1))}A_{z^m}^*\left(z^{k+n(m-1)}\right)\\
   &=0+\sum_{n=1}^\infty \frac{\lambda^n}{\prod_{j=0}^{n-1}(k+j(m-1))}A_{z^m}^*\left(z^{k+n(m-1)}\right)\\
   &=\sum_{n=1}^\infty \frac{\lambda^n}{\prod_{j=0}^{n-1}(k+j(m-1))}\cdot(k+n(m-1)-m+1)\cdot\left(z^{k+n(m-1)-m+1}\right)\\
   &=\sum_{n=1}^\infty \frac{\lambda^n}{\prod_{j=0}^{n-1}(k+j(m-1))}\cdot(k+(n-1)(m-1))\cdot\left(z^{k+(n-1)(m-1)}\right)\\
    &=\sum_{n=1}^\infty \frac{\lambda\cdot\lambda^{n-1}}{\prod_{j=0}^{n-2}(k+j(m-1))}\left(z^{k+(n-1)(m-1)}\right)\\
    &=\lambda\cdot \sum_{n=0}^\infty \frac{\lambda^n}{\prod_{j=0}^{n-1}(k+j(m-1))}z^{k+n(m-1)}\\
    &=\lambda H_k(z)
\end{align*}
The above calculation is valid if and only $H_k$ lands back in $H^2$ i.e. $\lambda\in \mathbb{C}$. 
\end{proof}

\begin{corollary}Let $f$ be the symbol for a densely defined Liouville operator, $A_f$, and suppose $\{ z_j \}_{j \in \Lambda}$ (where $\Lambda$ is a potentially infinite index set) be the collection of zeros of $f$ with multiplicities $\{ m_j \}_{j \in \Lambda}$. Then $0$ is an eigenvalue for $A_f^*$ with eigendimension $\sum_{j \in \Lambda} m_j$.
\end{corollary}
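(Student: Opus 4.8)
The plan is to identify $\ker A_f^{*}$ explicitly with the model space of a Blaschke product built from the zeros of $f$. Since $f$ is the symbol of a densely defined Liouville operator it is holomorphic on $\mathbb{D}$ and is a quotient of functions of bounded characteristic (by the structure theorem for admissible symbols, $f=b/(a\tfrac{d}{dz}\Phi)$), hence $f$ itself is of bounded characteristic and its zero set $\{z_j\}_{j\in\Lambda}$, with multiplicities $\{m_j\}$, forms a Blaschke sequence; let $B$ be the Blaschke product having exactly these zeros. I would prove $\ker A_f^{*}=H^2\ominus BH^2$, a space of dimension $\sum_{j\in\Lambda}m_j$, which immediately exhibits $0$ as an eigenvalue of $A_f^{*}$ of the stated eigendimension.

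For the inclusion $H^2\ominus BH^2\subseteq\ker A_f^{*}$ I would invoke Proposition \ref{prop:derivative-adjoint}. If $z_j$ has multiplicity $m_j$ then $f^{(\ell)}(z_j)=0$ for $0\le\ell\le m_j-1$, so for each $1\le r\le m_j$ every term in $A_f^{*}g^{[r-1]}_{z_j}=\sum_{\ell=0}^{r-1}\overline{f^{(\ell)}(z_j)}\,g^{[r-\ell]}_{z_j}$ vanishes; hence $g^{[0]}_{z_j},\dots,g^{[m_j-1]}_{z_j}\in\ker A_f^{*}$. A function $h\in H^2$ is orthogonal to the entire family $\{g^{[i]}_{z_j}:j\in\Lambda,\ 0\le i\le m_j-1\}$ exactly when $h^{(i)}(z_j)=0$ for all of these $i,j$, i.e.\ exactly when $h\in BH^2$; therefore the closed span of the family equals $H^2\ominus BH^2$, which is thus contained in $\ker A_f^{*}$.

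For the reverse inclusion it suffices, since $\ker A_f^{*}=(\overline{\operatorname{ran}A_f})^{\perp}$, to show $BH^2\subseteq\overline{\operatorname{ran}A_f}$. Writing $f=b/a$ as in Theorem \ref{thm:sarason} (with $b,a\in H^\infty$, $a$ outer), the computation from the proof of the earlier proposition on densely defined $A_f$ applies without change: for every $h\in H^2$ the function $g:=J(ah)$ lies in $\mathcal D(A_f)$ and $A_fg=f\cdot(ah)=bh$, so $bH^2\subseteq\operatorname{ran}A_f$. Because $a$ is zero-free, $b$ and $f$ have the same zeros, so $B$ is the Blaschke factor of $b$; writing $b=BG$ with $G$ outer (which is exactly the hypothesis discussed below), density of $GH^2$ in $H^2$ gives $\overline{bH^2}=BH^2$. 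Hence $BH^2\subseteq\overline{\operatorname{ran}A_f}$ and $\ker A_f^{*}\subseteq H^2\ominus BH^2$, which closes the argument.

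The main obstacle is that the conclusion tacitly assumes $f$ (equivalently $b$) has no singular inner factor, which is precisely what makes $B$ the \emph{full} inner part of $f$. If a nontrivial singular inner function $S$ divided $f$, one checks (using $g'\in N^+$ for $g\in\mathcal D(A_f)$ and the Smirnov maximum principle) that $\overline{\operatorname{ran}A_f}$ is instead $B_fH^2$ with $B_f$ the entire inner factor of $f$, so $\ker A_f^{*}=H^2\ominus B_fH^2$ is infinite dimensional even when $\sum_jm_j<\infty$ — e.g.\ $f=S$ has no zeros at all yet $\ker A_S^{*}\ne\{0\}$. Thus a fully rigorous statement should either restrict to symbols whose inner factor is a Blaschke product (automatic for rational $f$, which includes the motivating cases $f(z)=\alpha z+\beta$ and $f(z)=z^m$) or phrase the eigendimension as $\dim\bigl(H^2\ominus B_fH^2\bigr)$. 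One should also record that the Blaschke-sequence property of $\{z_j\}$, inherited from $f$ being of bounded characteristic, is what legitimizes the equality $\dim(H^2\ominus BH^2)=\sum_jm_j$ when $\Lambda$ is infinite.
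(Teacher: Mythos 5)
Your core argument for the lower bound is exactly the paper's proof: the paper simply notes $f^{(\ell)}(z_j)=0$ for $\ell\le m_j-1$ and applies Proposition \ref{prop:derivative-adjoint} to conclude $A_f^*g^{[r-1]}_{z_j}=0$ for $r=1,\dots,m_j$, and then it stops --- it exhibits a $\sum_j m_j$-dimensional subspace of the zero eigenspace and never addresses whether the eigenspace could be larger. Everything else in your write-up is additional relative to the paper: the identification of the closed span of these derivative kernels with the model space $H^2\ominus BH^2$, and the upper bound $\ker A_f^*\subseteq H^2\ominus BH^2$ obtained from $A_f\bigl(J(ah)\bigr)=bh$ (the same computation the paper uses for dense definedness, via Lemma \ref{lem:dense-J}) together with $\overline{bH^2}=BH^2$ when $b$ has no singular inner factor. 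That extra work is what would actually justify the word ``eigendimension'' in the corollary, and your observation that the exact count can fail when the symbol carries a singular inner factor is a genuine and relevant criticism which the paper's one-sided argument cannot rule out.

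Two cautions about the added material. First, your reverse inclusion presupposes the Sarason form $f=b/a$ of Theorem \ref{thm:sarason}; for Liouville operators the paper proves this form is \emph{sufficient} for dense definedness, while its necessary condition only gives the weaker factorization $f=b/\bigl(a\tfrac{d}{dz}\Phi\bigr)$ with $\Phi\in\mathrm{BMOA}$. So your exact identification of $\ker A_f^*$ holds under an extra hypothesis on $f$, not for an arbitrary densely defined $A_f$; relatedly, the inference that $f$ is of bounded characteristic (hence that $\{z_j\}$ is a Blaschke sequence) does not follow from that weaker factorization, since $\tfrac{d}{dz}\Phi$ need not lie in the Nevanlinna class. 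Second, in the singular-factor discussion the parenthetical claim that $g'\in N^+$ for every $g\in\mathcal{D}(A_f)$ is not immediate --- a priori one only gets $g'$ of bounded characteristic, and upgrading this to $N^+$ requires a separate argument --- so the asserted equality $\overline{\operatorname{ran}A_f}=B_fH^2$ in that case needs more care. Since you present that part as a caveat rather than as proof of the stated corollary, it does not undermine your main argument, but it should be flagged as conditional.
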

\begin{proof}Since $A_f$ is densely defined, $f$ is analytic in the disc. Then for each $i \in \Lambda$, $f(z) = (z-z_i)^{m_i}f_{i}(z)$, where $f_i(z)$ does not vanish at $z_i$. Note that $f^{(m)}(z_i) = 0$ for $m=1,\ldots,m_i-1$.

By \eqref{eq:adjoint-on-kernel}, \[ A_f^* g^{[j-1]}_{z_i} = \sum_{\ell = 0}^{j-1} \overline{f^{(\ell)}(z_i)} g^{[j-\ell]}_{z_i} = 0\] for $j = 1,\ldots,{m_i}.$ Hence, $g^{[j-1]}_{z_i}$ is an eigenfunction for $A_f^*$ with eigenvalue $0$, and there is a contribution of $m_i$ dimensions to the zero eigenspace. As the above argument applies for all $i \in \Lambda$, the conclusion follows.
\end{proof}

\section{An Application of the Adjoint Formula}
We will show that the only symbol that can give rise to a self-adjoint Liouville operator is $f(z)=cz$ for $c\in \mathbb{R}$. This is more restrictive than what is seen in for Toeplitz operator which requires a real-valued symbol. 

\begin{theorem}If $A_f$ is self-adjoint then $f(z)=cz$ for $c\in \mathbb{R}$. 
\end{theorem}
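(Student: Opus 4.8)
The plan is to test self-adjointness against the reproducing kernels of $H^2(\mathbb{D})$, since on those vectors both $A_f$ and $A_f^*$ have completely explicit expressions. First I would record the standing facts: a self-adjoint operator is densely defined with $\mathcal{D}(A_f)=\mathcal{D}(A_f^*)$ and $A_f=A_f^*$ on that common domain, and, by the earlier result on symbols of densely defined Liouville operators, $f$ is analytic on $\mathbb{D}$.

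Next I would invoke Proposition \ref{prop:derivative-adjoint} with $j=1$. Writing $k_w(z)=g^{[0]}_w(z)=\frac{1}{1-\bar w z}$ for the reproducing kernel at $w\in\mathbb{D}$, that proposition gives $k_w\in\mathcal{D}(A_f^*)$ and $A_f^* k_w=\overline{f(w)}\,g^{[1]}_w$, where $g^{[1]}_w(z)=\frac{z}{(1-\bar w z)^2}$. Because $A_f$ is self-adjoint, $k_w$ also lies in $\mathcal{D}(A_f)$ and $A_f k_w=A_f^* k_w$. Computing the left-hand side directly, $A_f k_w=f\cdot\frac{d}{dz}k_w=f(z)\frac{\bar w}{(1-\bar w z)^2}$. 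Equating the two expressions and canceling the common nonvanishing factor $1/(1-\bar w z)^2$ (note $|\bar w z|<1$ on $\mathbb{D}\times\mathbb{D}$) yields the identity
\[ \bar w\, f(z)=\overline{f(w)}\,z \qquad\text{for all } z,w\in\mathbb{D}. \]

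Finally, fixing any $w_0\in\mathbb{D}\setminus\{0\}$ forces $f(z)=cz$ with $c:=\overline{f(w_0)}/\bar{w_0}$, so $f$ is linear; substituting $f(z)=cz$ back into the displayed identity gives $c\,\bar w\,z=\bar c\,\bar w\,z$ for all $z,w$, hence $c=\bar c$, i.e.\ $c\in\mathbb{R}$. I do not expect a serious obstacle; the only points needing a word of care are that $k_w\in\mathcal{D}(A_f)$ is automatic once $\mathcal{D}(A_f)=\mathcal{D}(A_f^*)$, and that equality of the two $H^2$-functions $A_f k_w$ and $A_f^* k_w$ legitimately yields the pointwise identity above after removing the nonvanishing factor. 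A slightly messier alternative would feed the constant function $1$ and a few low-degree monomials into the adjoint formula of Theorem \ref{adjoint formula}, but the kernel-function argument is cleaner and avoids dealing with the Riesz projection $P_{H^2}$.
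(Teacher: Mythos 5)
Your argument is correct and follows essentially the same route as the paper: both proofs test $A_f K_w = A_f^* K_w$ on the reproducing kernels, using the formula $A_f^* K_w = \overline{f(w)}\,g^{[1]}_w$ from Proposition \ref{prop:derivative-adjoint}. The only (minor, and arguably cleaner) difference is that you extract the full identity $\bar w f(z) = \overline{f(w)}\,z$ and read off both linearity and $c\in\mathbb{R}$ directly, whereas the paper compares low-order Taylor coefficients and invokes the spectrum $\sigma(A_{cz})=\{cn\}$ to get realness of $c$.
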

\begin{proof} Let $f(z)=\sum_{n=0}^\infty f_n z^n$ be the symbol of a Liouville operator. Suppose $A_f$ is self adjoint. Since, $A_f^*$ is closed, the kernel is in the domain of $A_f^*$ and hence $A_f$. If we apply the operators to the kernel function $K_w(z)$ we have that 
\[A_f[K_w](z)=\bar{w}f_0+(\bar{w}f_1+2f_0\bar{w}^2)z+\ldots\]
and 
\[A_f^*[K_w](z)= 0+\langle f, k_{\bar{w}}\rangle z+ \ldots \]
Moreover, the above holds for all $w\in \mathbb{D}$. Comparing term by term we have that $f_0=0$. Hence, we have that $f(w)=f_1w$ by comparing the first two terms. Additionally, since self-adjoint operators must have real spectrum, we note that $f_1\in \mathbb{R}$ since the spectrum is given by $\sigma(A_f)=\{f_1n\mid n\in \mathbb{N}\}$ by our above proposition. 
\end{proof}
\section{Weighted Liouville Operators}
In this section, we'll discusso weighted Liouville Operators and prove some results in analogy to whats know in weighted composition operators. For a good overview of weighted composition operators over the Hardy space see \cite{matache2008weighted}.

\begin{definition}
A weighted Liouville operator, with symbols $f$ and $\varphi$, is given formally as  \[A_{f,\varphi}g(z)=f(z)\frac{d}{dz}(g(\varphi(z)))=f(z)\varphi'(z)g'(\varphi(z)).\]
\end{definition}

\subsection{Conditions for Self-Adjointness of Liouville Weighted Composition Operators}  Here we wish to show how the selection of composition symbol, $\varphi(z)$, can influence the form of the symbol $f(z)$. Specifically, assuming $A_{f,\varphi}$ is a bounded and self-adjoint operator, then $A_{f,\varphi} K(\cdot,z) = A_{f,\varphi}^* K(\cdot,z)$, which can be utilized to extract conditions connecting $f$ and $\varphi$. 

\begin{definition}
Define, 
\[K^{(1)}_w(z)=\frac{z}{(1-\bar{w}z)^2}.\]
We call $K^{(1)}_w(z)\in H^2$ the reproducing kernel for the derivative of a Hardy space function at the point $w\in \mathbb{D}$.
\end{definition}
\begin{lemma}For a densely defined weighted Liouville operator we have $A_{f,\varphi}K_w=\overline{f(w)\varphi'(w)}K^{(1)}_{\varphi(w)}.$ 

\end{lemma}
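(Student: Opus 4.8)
The plan is to compute $A_{f,\varphi} K_w$ directly from the definition and recognize the result as a scalar multiple of $K^{(1)}_{\varphi(w)}$. First I would note that, by the reproducing property for the derivative, for any $g \in H^2$ we have $g'(u) = \langle g, K^{(1)}_u \rangle_{H^2}$, where $K^{(1)}_u(z) = z/(1-\bar u z)^2$; this is the content of the preceding definition and is verified by differentiating $\langle g, K_u\rangle = g(u)$ with respect to $\bar u$, or equivalently by the series computation $K^{(1)}_u(z) = \sum_{n=1}^\infty n \bar u^{\,n-1} z^n$ so that $\langle \sum g_n z^n, K^{(1)}_u\rangle = \sum_{n\ge 1} n g_n u^{n-1} = g'(u)$.

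Next, apply the definition of the weighted Liouville operator to $K_w$: since $A_{f,\varphi} g(z) = f(z)\varphi'(z) g'(\varphi(z))$, we get
\[
A_{f,\varphi} K_w(z) = f(z)\,\varphi'(z)\, (K_w)'(\varphi(z)).
\]
Here the inner derivative is taken with respect to the $H^2$ variable, i.e. $(K_w)'(u) = \frac{d}{du}\frac{1}{1-\bar w u} = \frac{\bar w}{(1-\bar w u)^2}$. However, the cleaner route that yields the stated form is to instead pair $A_{f,\varphi}K_w$ against an arbitrary $g$ via the adjoint relation: for $g \in \mathcal{D}(A_{f,\varphi})$,
\[
\langle g, A_{f,\varphi} K_w \rangle_{H^2} = \overline{\langle A_{f,\varphi} K_w, g\rangle_{H^2}},
\]
but more usefully one computes $\langle A_{f,\varphi} g, K_w\rangle$ is not what we want; rather we want $A_{f,\varphi}$ applied to $K_w$, so the honest computation is the pointwise one above. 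Evaluating, $A_{f,\varphi}K_w(z) = f(z)\varphi'(z)\cdot \frac{\bar w}{(1-\bar w\,\varphi(z))^2}$, and this is a function of $z$ that is \emph{not} literally $\overline{f(w)\varphi'(w)}K^{(1)}_{\varphi(w)}(z)$ — so the correct interpretation of the lemma must be that we are computing the action of the \emph{adjoint-type} pairing, i.e. the vector $v \in H^2$ representing the functional $g \mapsto \langle A_{f,\varphi}g, K_w\rangle$ when $A_{f,\varphi}$ is densely defined; by Theorem \ref{thm:occupationkernel_weightedcomp}-style reasoning and the chain rule, $\langle A_{f,\varphi} g, K_w\rangle = f(w)\varphi'(w) g'(\varphi(w)) = f(w)\varphi'(w)\langle g, K^{(1)}_{\varphi(w)}\rangle = \langle g, \overline{f(w)\varphi'(w)}\,K^{(1)}_{\varphi(w)}\rangle$, which identifies the representing vector as claimed.

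Thus the key steps, in order, are: (1) record the derivative reproducing identity $g'(u) = \langle g, K^{(1)}_u\rangle_{H^2}$; (2) write out $\langle A_{f,\varphi}g, K_w\rangle_{H^2} = (A_{f,\varphi}g)(w) = f(w)\varphi'(w)\,g'(\varphi(w))$ using that evaluation at $w$ against $K_w$ is the reproducing property; (3) substitute the identity from (1) with $u = \varphi(w)$; (4) pull the scalar $f(w)\varphi'(w)$ across the inner product as its conjugate. The main obstacle is purely notational: being careful that the derivative appearing in $A_{f,\varphi}$ is an honest $z$-derivative of the composition $g\circ\varphi$, so the chain rule produces exactly the factor $\varphi'(z)$ already written into the symbol, and that "reproducing kernel for the derivative" means $g'(\varphi(w))$ pairs against $K^{(1)}_{\varphi(w)}$ rather than against some derivative of $K_w$ in the $w$ slot. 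Once that bookkeeping is fixed, the computation is immediate and the density hypothesis is used only to guarantee $K_w \in \mathcal{D}(A_{f,\varphi}^*)$ so that the representing vector genuinely equals $A_{f,\varphi}^* K_w$ — wait, to keep the statement as written ($A_{f,\varphi}K_w = \cdots$) one instead invokes that $K_w$ lies in $\mathcal{D}(A_{f,\varphi})$ and computes the image directly; I would mention both readings and prove the self-consistent one.
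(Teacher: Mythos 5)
Your proof is correct and follows essentially the paper's own argument: pair $A_{f,\varphi}g$ against $K_w$, use the reproducing property together with $g'(u)=\langle g, K^{(1)}_u\rangle_{H^2}$ at $u=\varphi(w)$, and pull the scalar across the inner product to identify the representing vector $\overline{f(w)\varphi'(w)}\,K^{(1)}_{\varphi(w)}$. You are also right that the displayed statement ($A_{f,\varphi}K_w$) is a typo for the adjoint: the paper's own proof concludes $A_{f,\varphi}^*K_w=\overline{f(w)\varphi'(w)}K^{(1)}_{\varphi(w)}$, which is exactly the reading you proved.
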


\begin{proof}Suppose that $g \in \mathcal{D}(A_{f,\varphi})$. Then by the reproducing property
\[\ip{A_{f,\varphi}g}{K_w}=g'(\varphi(w))\varphi'(w)f(w)=\ip{g}{\overline{f(w)\varphi'(w)}K^{(1)}_{\varphi(w)}}.\]
Therefore, 
\[A_{f,\varphi}^*K_w=\overline{f(w)\varphi'(w)}K^{(1)}_{\varphi(w)}\]
\end{proof}

\begin{theorem}\label{thm:self-adjoint}
If $A_{f,\varphi}$ is bounded, then $A_{f,\varphi}$ is self adjoint then necessarily 
\begin{equation}\label{relationonsymbols}
    \varphi'(z)f(z)=\frac{(z-\overline{\varphi(0)}z^2)(\overline{\varphi'(0)f'(0)+f(0)\varphi''(0)})+2z^2\overline{\varphi'(0)f(0)}}{(1-\overline{\varphi(0)}z)^3}
\end{equation}
is satisfied. 
\end{theorem}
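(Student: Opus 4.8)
The plan is to exploit the two descriptions of how $A_{f,\varphi}$ and its adjoint act on reproducing kernels, exactly as indicated in the discussion preceding the statement. Since $A_{f,\varphi}$ is bounded, $\mathcal{D}(A_{f,\varphi})=H^{2}(\mathbb{D})$, so every $K_{w}$ with $w\in\mathbb{D}$ lies in the domain, and self-adjointness gives $A_{f,\varphi}K_{w}=A_{f,\varphi}^{*}K_{w}$ for all $w\in\mathbb{D}$. I would compute the left-hand side directly: since $K_{w}(\zeta)=(1-\bar w\zeta)^{-1}$ has derivative $K_{w}'(\zeta)=\bar w(1-\bar w\zeta)^{-2}$,
\[
A_{f,\varphi}K_{w}(z)=f(z)\varphi'(z)K_{w}'(\varphi(z))=\frac{\bar w\,f(z)\varphi'(z)}{(1-\bar w\varphi(z))^{2}},
\]
while the lemma above gives $A_{f,\varphi}^{*}K_{w}(z)=\overline{f(w)\varphi'(w)}\,K^{(1)}_{\varphi(w)}(z)=\overline{f(w)\varphi'(w)}\,z(1-\overline{\varphi(w)}z)^{-2}$. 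Equating the two produces the functional identity
\[
\frac{\bar w\,f(z)\varphi'(z)}{(1-\bar w\varphi(z))^{2}}=\frac{\overline{f(w)\varphi'(w)}\,z}{(1-\overline{\varphi(w)}z)^{2}},\qquad z,w\in\mathbb{D},
\]
which encodes all the information that self-adjointness imposes through the kernels.

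From here I would extract \eqref{relationonsymbols} as the first-order content of this identity at $w=0$. Both sides are anti-holomorphic in $w$ --- the left through the factor $\bar w$ and the expansion $(1-\bar w\varphi(z))^{-2}=\sum_{k\ge0}(k+1)\varphi(z)^{k}\bar w^{k}$, the right through the anti-holomorphic factors $\overline{f(w)\varphi'(w)}$ and $\overline{\varphi(w)}$ --- so one may differentiate in $\bar w$ and set $w=0$, equivalently compare coefficients of $\bar w$. The left side then contributes $f(z)\varphi'(z)$. For the right side, the product rule together with $\partial_{\bar w}\overline{f(w)\varphi'(w)}=\overline{(f\varphi')'(w)}$ and $\partial_{\bar w}\overline{\varphi(w)}=\overline{\varphi'(w)}$ yields, at $w=0$, the term $\overline{f'(0)\varphi'(0)+f(0)\varphi''(0)}\,z(1-\overline{\varphi(0)}z)^{-2}$ together with a term proportional to $z^{2}(1-\overline{\varphi(0)}z)^{-3}$ coming from differentiating $(1-\overline{\varphi(w)}z)^{-2}$ through $\overline{\varphi(w)}$. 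Collecting both over the common denominator $(1-\overline{\varphi(0)}z)^{3}$ gives exactly \eqref{relationonsymbols}. As a byproduct, comparing the $\bar w^{0}$ coefficients forces $\overline{f(0)\varphi'(0)}\,z(1-\overline{\varphi(0)}z)^{-2}=0$, i.e.\ $f(0)\varphi'(0)=0$, consistent with \eqref{relationonsymbols} evaluated at $z=0$.

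I expect the only genuinely delicate part to be the bookkeeping in the last step. One must keep track of the Wirtinger calculus --- $\overline{f(w)\varphi'(w)}$ and $\overline{\varphi(w)}$ are functions of $\bar w$ alone, so their $\partial_{\bar w}$-derivatives are the conjugates of the ordinary derivatives of $f\varphi'$ and $\varphi$ --- and of the product and chain rules when differentiating $z(1-\overline{\varphi(w)}z)^{-2}$, as well as the algebra of recombining the pieces over a common cubic denominator. One should also record that the maps $w\mapsto A_{f,\varphi}K_{w}$ and $w\mapsto A_{f,\varphi}^{*}K_{w}$ are anti-holomorphic as $H^{2}$-valued functions, which is what makes the pointwise-in-$z$ comparison of $\bar w$-coefficients legitimate. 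Beyond that, the argument is routine.
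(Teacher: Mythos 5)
Your argument is correct and is essentially the paper's own proof: equate $A_{f,\varphi}K_w$ with $A_{f,\varphi}^{*}K_w$ to obtain the kernel identity, differentiate in $\bar w$, and set $w=0$. One bookkeeping remark: a careful product/chain rule gives the $z^{2}$-term with coefficient $2\overline{\varphi'(0)^{2}f(0)}$ (an extra factor of $\overline{\varphi'(0)}$ relative to \eqref{relationonsymbols}, a factor the paper's displayed computation also drops), but since your $\bar w^{0}$ comparison forces $f(0)\varphi'(0)=0$, this term vanishes either way and the stated necessary condition follows.
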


\begin{proof}
If $A_{f,\varphi}^* K(z,\alpha) = A_{f,\varphi} K(z,\alpha)$, then \begin{equation}\label{eq:selfadjointrelation} \frac{\overline{\varphi'(\alpha) f(\alpha)} z}{(1-\overline{\varphi(\alpha)}z)^2} = \frac{\varphi'(z)f(z) \bar\alpha}{(1-\bar \alpha \varphi(z))^2}\end{equation} for the specific case of the Hardy space. This gives the necessary condition for a bounded self adjoint Liouville weighted composition operator over the Hardy space. As $z$ is independent of $\bar{\alpha}$ we can take the derivative with respect to $\bar{\alpha}$:
\begin{equation}
\frac{z(1-\overline{\varphi(\alpha)}z)(\overline{\varphi'(\alpha)f'(\alpha)+f(\alpha)\varphi''(\alpha)})+2z^2\overline{\varphi'(\alpha)f(\alpha)}}{(1-\overline{\varphi(\alpha)}z)^3}-\frac{(1-\bar{\alpha}\varphi(z))\varphi'(z)f(z)+2\bar{\alpha}\varphi(z)}{(1-\bar{\alpha}\varphi(z))^3}
\end{equation}
Setting $\bar{\alpha}=0$ and rearranging gives
\[\varphi'(z)f(z)=\frac{(z-\overline{\varphi(0)}z^2)(\overline{\varphi'(0)f'(0)+f(0)\varphi''(0)})+2z^2\overline{\varphi'(0)f(0)}}{(1-\overline{\varphi(0)}z)^3}\]
\end{proof}

\begin{proposition}\label{prop:occkernelselfadjoint}
Consider the Liouville weighted composition operator, $A_{f,\varphi}$, as in Theorem \ref{thm:self-adjoint}. Let $T > 0$ and suppose that $\gamma:[0,T] \to \mathbb{D}$ satisfies $\dot \gamma(t) = f(\gamma(t))$ for $t \in [0,T]$, and let $\Gamma_{\gamma} \in H^2$ be the occupation kenrel in $H^2$ corresponding to $\gamma$. Then the following relation holds:
\begin{equation}\label{eq:occkernel_selfadjoint}
\left[ \int_0^T K'_{\gamma(t)}(z)dt \right] \phi'(z)f(z) = K_{\phi(\gamma(T))} - K_{\phi(\gamma(0))}.
\end{equation}
\end{proposition}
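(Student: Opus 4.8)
The plan is to combine the self-adjointness relation from Theorem \ref{thm:self-adjoint} with the occupation kernel adjoint identity of Theorem \ref{thm:occupationkernel_weightedcomp}, specialized to the Hardy space. First I would observe that by the lemma preceding Theorem \ref{thm:self-adjoint}, the weighted Liouville operator acts on kernel functions by $A_{f,\varphi} K_w = \overline{f(w)\varphi'(w)}\, K^{(1)}_{\varphi(w)}$, and $K^{(1)}_{\varphi(w)}(z) = z/(1-\overline{\varphi(w)}z)^2$. Integrating the scalar-valued identity $\int_0^T g(\varphi(\gamma(t)))' \varphi'(\gamma(t)) f(\gamma(t))\,dt = g(\varphi(\gamma(T))) - g(\varphi(\gamma(0)))$ from the proof of Theorem \ref{thm:occupationkernel_weightedcomp} shows $A_{f,\varphi}^* \Gamma_\gamma = K_{\varphi(\gamma(T))} - K_{\varphi(\gamma(0))}$.

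Next, since $A_{f,\varphi}$ is assumed self-adjoint (and bounded), $A_{f,\varphi}^* \Gamma_\gamma = A_{f,\varphi}\Gamma_\gamma$. So I would compute $A_{f,\varphi}\Gamma_\gamma$ directly. Writing $\Gamma_\gamma(z) = \int_0^T K_{\gamma(t)}(z)\,dt$ (using the expression for the occupation kernel given after Theorem \ref{thm:occupationkernel_weightedcomp}, or the reproducing property $\langle g, \Gamma_\gamma\rangle = \int_0^T g(\gamma(t))\,dt$), and differentiating under the integral sign, $\frac{d}{dz}\Gamma_\gamma(z) = \int_0^T K'_{\gamma(t)}(z)\,dt$. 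Then
\[
A_{f,\varphi}\Gamma_\gamma(z) = f(z)\varphi'(z)\, \Gamma_\gamma'(\varphi(z)) = f(z)\varphi'(z) \int_0^T K'_{\gamma(t)}(\varphi(z))\,dt.
\]
Equating this with $K_{\varphi(\gamma(T))} - K_{\varphi(\gamma(0))}$ gives the claimed identity, modulo the fact that the integral in the statement is written as $\int_0^T K'_{\gamma(t)}(z)\,dt$ rather than evaluated at $\varphi(z)$; I would note that the displayed equation \eqref{eq:occkernel_selfadjoint} should be read with the bracketed factor meaning $\Gamma_\gamma'$ evaluated at $\varphi(z)$, i.e. the composition is already built into $A_{f,\varphi}$, and reconcile the notation accordingly (the paper's $\phi$ is the same as $\varphi$).

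The main obstacles are two-fold. First, justifying differentiation under the integral sign and the interchange of the bounded operator $A_{f,\varphi}$ with the Bochner-type integral $\int_0^T K_{\gamma(t)}\,dt$: this is routine given continuity of $t \mapsto \gamma(t)$ into $\mathbb{D}$ and the fact that $\gamma([0,T])$ is compact in the open disc, so all the reproducing kernels and their derivatives stay uniformly bounded in $H^2$, but it should be stated. Second, one must be careful that $\Gamma_\gamma \in \mathcal{D}(A_{f,\varphi})$ — this follows since $A_{f,\varphi}$ is bounded (hence everywhere defined) by hypothesis, and $\Gamma_\gamma \in \mathcal{D}(A_{f,\varphi}^*)$ by Theorem \ref{thm:occupationkernel_weightedcomp}, so self-adjointness makes both sides meaningful. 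I would close by remarking that this furnishes a data-driven relation constraining $f$ and $\varphi$ along trajectories, complementing the pointwise relation \eqref{relationonsymbols}.
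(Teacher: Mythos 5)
Your argument is correct and is essentially the paper's own proof, written out in full: the left-hand side is $A_{f,\varphi}\Gamma_\gamma$ obtained by differentiating $\Gamma_\gamma(z)=\int_0^T K_{\gamma(t)}(z)\,dt$ under the integral, the right-hand side is $A_{f,\varphi}^*\Gamma_\gamma = K_{\varphi(\gamma(T))}-K_{\varphi(\gamma(0))}$ from Theorem \ref{thm:occupationkernel_weightedcomp}, and the two sides are equated by the boundedness and self-adjointness assumed via Theorem \ref{thm:self-adjoint}. Your remark that the bracketed factor should be read as $\Gamma_\gamma'$ evaluated at $\varphi(z)$ (and that $\phi$ is $\varphi$) correctly resolves a notational slip in the displayed equation rather than indicating any gap in the argument.
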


\begin{proof}
The relation in \eqref{eq:occkernel_selfadjoint} follows from taking the derivative under the integral of $\Gamma_{\gamma}(z) = \int_0^T K_{\gamma(t)}(z) dt$ on the left hand side, and leveraging Theorem \ref{thm:occupationkernel_weightedcomp} on the right.
\end{proof}

It can be noted that \eqref{eq:selfadjointrelation} follows from Proposition \ref{prop:occkernelselfadjoint} after taking the derivative with respect to $T$ of an appropriate trajectory.

\subsection{Boundedness for Liouville Weighted Composition Operators }The action of the adjoint of a weighted composition operator on a kernel function gives some immediate bound conditions. One can establish that for a weighted composition operator we have $W^*_{f,\varphi}K_w=f(\omega)K_{\varphi(\omega)}$. 
As a corrollary, if $W_{f,\varphi}$ is a bounded weighted composition operator on $H^2(\mathbb{D})$ then necessarily 
$B:=\sup\left\{\frac{|f(\omega)|^2(1-|\omega|^2)}{1-|\varphi(\omega)|^2}\ : \ \omega\in \mathbb{D}\right\}<\infty$.
We will prove an analogous proposition.

\begin{proposition}\label{conj1}
If $A_{f,\varphi}$ is bounded then necessarily 
\[B':=\sup\left\{\frac{|f(\omega)|^2|\varphi'(\omega)|^2(1-|\omega|^2)}{(1-|\varphi(\omega)|^2)^2}\cdot\left(\frac{1+|\varphi(w)|^2}{1-|\varphi(w)|^2}\right)\ : \ \omega\in \mathbb{D}\right\}<\infty \]
\end{proposition}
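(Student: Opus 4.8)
The plan is to mimic the argument sketched just before the statement for the plain weighted composition operator $W_{f,\varphi}$, but starting from the already-computed action of $A_{f,\varphi}$ on reproducing kernels, namely $A_{f,\varphi}^* K_w = \overline{f(w)\varphi'(w)}\,K^{(1)}_{\varphi(w)}$ (the lemma preceding Theorem \ref{thm:self-adjoint}). Since $A_{f,\varphi}$ bounded implies $A_{f,\varphi}^*$ bounded with the same norm $M := \|A_{f,\varphi}\|$, we immediately get $\|A_{f,\varphi}^* K_w\|_{H^2} \le M \|K_w\|_{H^2}$ for every $w \in \mathbb{D}$. The strategy is to rewrite this inequality explicitly in terms of $f$, $\varphi$, and $w$, extract the supremum, and observe that the extra factor $\tfrac{1+|\varphi(w)|^2}{1-|\varphi(w)|^2}$ is exactly what appears when one computes $\|K^{(1)}_{\varphi(w)}\|_{H^2}^2$ rather than $\|K_{\varphi(w)}\|_{H^2}^2$.

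First I would record the elementary norm computations in $H^2(\mathbb{D})$: for $v \in \mathbb{D}$, $\|K_v\|_{H^2}^2 = \tfrac{1}{1-|v|^2}$, and for the derivative kernel $K^{(1)}_v(z) = \tfrac{z}{(1-\bar v z)^2} = \sum_{n\ge 1} n\, \bar v^{\,n-1} z^n$, a direct summation gives $\|K^{(1)}_v\|_{H^2}^2 = \sum_{n\ge 1} n^2 |v|^{2(n-1)} = \tfrac{1+|v|^2}{(1-|v|^2)^3}$. Plugging $v = \varphi(w)$ into the latter and $v = w$ into the former, the inequality $\|A_{f,\varphi}^* K_w\|^2 \le M^2 \|K_w\|^2$ becomes
\begin{equation*}
|f(w)|^2 |\varphi'(w)|^2 \cdot \frac{1+|\varphi(w)|^2}{(1-|\varphi(w)|^2)^3} \;\le\; \frac{M^2}{1-|w|^2}.
\end{equation*}
Multiplying through by $1-|w|^2$ and regrouping one factor of $(1-|\varphi(w)|^2)$ to isolate the stated quotient yields
\begin{equation*}
\frac{|f(w)|^2|\varphi'(w)|^2(1-|w|^2)}{(1-|\varphi(w)|^2)^2}\cdot\frac{1+|\varphi(w)|^2}{1-|\varphi(w)|^2} \;\le\; M^2
\end{equation*}
for all $w \in \mathbb{D}$, and taking the supremum over $w$ gives $B' \le M^2 < \infty$, which is the claim.

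This argument is essentially routine once the derivative-kernel norm is computed, so I do not expect a serious obstacle; the only point requiring mild care is the justification that $K_w \in \mathcal{D}(A_{f,\varphi})$ (so that the adjoint identity and the boundedness estimate genuinely apply at every $w$) — this follows because $A_{f,\varphi}$ bounded means $\mathcal{D}(A_{f,\varphi}) = H^2(\mathbb{D})$, and because $A_{f,\varphi}$ is closed and everywhere defined. A secondary bookkeeping point is confirming the closed form $\sum_{n\ge 1} n^2 r^{n-1} = \tfrac{1+r}{(1-r)^3}$ for $0 \le r < 1$, which is a standard manipulation of the geometric series (differentiate $\sum r^n$ twice, or write $n^2 = n(n-1) + n$). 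Beyond these, the proof is a one-line consequence of the kernel formula and the operator-norm bound.
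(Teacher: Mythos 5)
Your proposal is correct and follows essentially the same route as the paper: the paper applies $A_{f,\varphi}^*$ to the normalized kernels $k_w=\sqrt{1-|w|^2}\,K_w$ and uses the same identity $\|K^{(1)}_{\varphi(w)}\|^2=\frac{1+|\varphi(w)|^2}{(1-|\varphi(w)|^2)^3}$ together with $\|A_{f,\varphi}^*k_w\|\le\|A_{f,\varphi}\|$, which is just your computation with the factor $\|K_w\|^2=(1-|w|^2)^{-1}$ absorbed into the kernel. The only cosmetic difference is normalization, so there is nothing further to add.
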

\begin{proof}
Let $k_w=\sqrt{1-|w|^2}K_w$ be the normalized kernel at $w\in \mathbb{D}$. Note, 
\begin{equation}\label{action on kernel}
    \|A_{f,\varphi}^*k_w\|^2=|f(w)||\varphi'(w)|^2\left(1-|w|^2\right)\|K^{(1)}_{\varphi(w)}\|^2
\end{equation}
where 
\[\|K^{(1)}_{\varphi(w)}\|^2=\frac{1+|\varphi(w)|^2}{\left(1-|\varphi(w)|^2\right)^3}.\]
The proposition is a direct consequence of the above formulas and the fact that \[\|A_{f,\varphi}^*k_w\|\leq \|A_{f,\varphi}^*\|^2=\|A_{f,\varphi}\|^2.\]
\end{proof}
If one has the additional assumption that $\varphi$ is a finite Blashcke product then a stronger theorem can stated. In particular, the additional structure given by the appearance of the derivative of $\varphi$ leads to the following corollary. 
\begin{corollary}
If $\varphi$ is a finite Blashcke product and $A_{f,\varphi}$ is bounded then necessarily 
\[B':=\sup\left\{\frac{|f(\omega)|^2(1+|\varphi(\omega)|^2)}{(1-|\varphi(\omega)|^2)^2}\ : \ \omega\in \mathbb{D}\right\}<\infty \]
\end{corollary}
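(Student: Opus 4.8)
The plan is to derive the stated bound as a corollary of Proposition~\ref{conj1} by exploiting a classical pointwise estimate for the derivative of a finite Blaschke product. Recall that Proposition~\ref{conj1} gives
\[
\sup_{\omega \in \mathbb{D}} \frac{|f(\omega)|^2|\varphi'(\omega)|^2(1-|\omega|^2)}{(1-|\varphi(\omega)|^2)^2}\cdot\frac{1+|\varphi(\omega)|^2}{1-|\varphi(\omega)|^2} < \infty,
\]
so to obtain the corollary it suffices to show that the factor $|\varphi'(\omega)|^2(1-|\omega|^2)$ is bounded below by a positive constant times $1-|\varphi(\omega)|^2$, or at least that removing it entirely does not destroy finiteness of the supremum. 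The key input is the Schwarz--Pick type identity: for any finite Blaschke product $\varphi$ of degree $N$, one has the exact formula $\frac{|\varphi'(\omega)|(1-|\omega|^2)}{1-|\varphi(\omega)|^2} = \sum_{j=1}^{N} \frac{1-|a_j|^2}{|1-\overline{a_j}\omega|^2}\cdot\frac{|1-\overline{a_j}\omega|^2}{\cdots}$, where $a_j$ are the zeros of $\varphi$; more usefully, the quantity $\frac{|\varphi'(\omega)|(1-|\omega|^2)}{1-|\varphi(\omega)|^2}$ equals a sum of $N$ terms each lying in $(0,1]$, so it is bounded above by $N$ and, crucially, bounded below by a positive constant on all of $\mathbb{D}$ (it extends continuously and strictly positively to the closed disc since a finite Blaschke product has no critical points on $\mathbb{T}$ and is a local diffeomorphism near the boundary).

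First I would record the identity $\dfrac{|\varphi'(\omega)|(1-|\omega|^2)}{1-|\varphi(\omega)|^2} = \sum_{j=1}^N \dfrac{(1-|a_j|^2)}{|1-\overline{a_j}\omega|^2}\Big/\Big(\text{suitable normalization}\Big)$; the clean statement I would actually invoke is that for a finite Blaschke product this ratio is $\geq c_\varphi > 0$ uniformly on $\mathbb{D}$, a standard fact (it is the hyperbolic derivative of $\varphi$, which for a finite Blaschke product is bounded away from $0$). Second, I would rewrite the supremand from Proposition~\ref{conj1} as
\[
\frac{|f(\omega)|^2(1+|\varphi(\omega)|^2)}{(1-|\varphi(\omega)|^2)^2}\cdot\left(\frac{|\varphi'(\omega)|(1-|\omega|^2)}{1-|\varphi(\omega)|^2}\right)^2,
\]
so that the displayed quantity in the corollary is exactly this expression divided by the squared hyperbolic derivative. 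Third, since that squared hyperbolic derivative is bounded below by $c_\varphi^2 > 0$, finiteness of the supremum in Proposition~\ref{conj1} immediately forces
\[
\sup_{\omega\in\mathbb{D}} \frac{|f(\omega)|^2(1+|\varphi(\omega)|^2)}{(1-|\varphi(\omega)|^2)^2} \leq \frac{1}{c_\varphi^2}\, B' < \infty,
\]
which is the claim.

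The main obstacle is pinning down and citing the correct lower bound $c_\varphi > 0$ for the hyperbolic derivative of a finite Blaschke product: one must verify it does not degenerate as $\omega \to \mathbb{T}$. This is where finiteness of the Blaschke product is essential — for an infinite Blaschke product the hyperbolic derivative can approach $0$ near accumulation points of the zeros, and the corollary would fail. Concretely I would argue that $\varphi$ extends analytically across $\mathbb{T}$ (finitely many poles, all outside $\overline{\mathbb{D}}$), that $|\varphi| = 1$ on $\mathbb{T}$, and that $\varphi'$ has no zeros on $\mathbb{T}$ (a degree-$N$ Blaschke product has exactly $N-1$ critical points counted in $\mathbb{D}$, none on the circle), so the continuous function $\omega \mapsto |\varphi'(\omega)|(1-|\omega|^2)/(1-|\varphi(\omega)|^2)$ extends continuously and strictly positively to the compact set $\overline{\mathbb{D}}$ — using L'Hôpital in the radial variable at boundary points to evaluate the $0/0$ limit — and hence attains a positive minimum $c_\varphi$. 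Everything else is bookkeeping: substituting into the bound from Proposition~\ref{conj1} and reading off the corollary.
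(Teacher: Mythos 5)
Your overall strategy---divide the supremand of Proposition~\ref{conj1} by a quantity depending only on $\varphi$ that is bounded below---is the right idea, but the specific lower bound you invoke is false, and this is a genuine gap. You claim that the hyperbolic derivative ratio $D(\omega) := |\varphi'(\omega)|(1-|\omega|^2)/(1-|\varphi(\omega)|^2)$ of a finite Blaschke product is bounded below by some $c_\varphi>0$ on all of $\mathbb{D}$, ``extending continuously and strictly positively'' to $\overline{\mathbb{D}}$. But, as you yourself note, a degree-$N$ Blaschke product has $N-1$ critical points inside $\mathbb{D}$; at each such point $\omega_0$ one has $\varphi'(\omega_0)=0$ while $1-|\varphi(\omega_0)|^2>0$, so $D(\omega_0)=0$. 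Hence for every finite Blaschke product of degree at least $2$ no global positive lower bound $c_\varphi$ exists, and the step ``divide the finite supremum of Proposition~\ref{conj1} by $c_\varphi^2$'' collapses. What is true---and is all the paper actually uses, citing Garcia--Mashreghi--Ross---is the boundary statement $\lim_{|\omega|\to 1} D(\omega)=1$, combined with $\min_{\mathbb{T}}|\varphi'|>0$; this controls the corollary's supremand only on an annulus $r<|\omega|<1$, and the remaining compact set $|\omega|\le r$ has to be handled separately (there $|\varphi|$ is bounded away from $1$, so the question reduces to boundedness of $f$ on that set), which your argument as written does not do and cannot do via the vanishing factor $D$.

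A secondary, repairable slip: the supremand of Proposition~\ref{conj1} is not the corollary's supremand times $D(\omega)^2$. The exact relation is $\text{(Prop.\ supremand)} = \text{(Cor.\ supremand)}\cdot |\varphi'(\omega)|\cdot D(\omega)$, which differs from your factorization by the factor $(1-|\varphi(\omega)|^2)/(1-|\omega|^2)$. For a finite Blaschke product that extra ratio is bounded above and below near $\mathbb{T}$, so this alone would not break a correctly localized (near-boundary) argument, but the algebra should be corrected together with the main issue above.
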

\begin{proof}
We need only note that 
\[\lim_{|\omega|\rightarrow 1}\frac{\varphi'(\omega)(1-|\omega|^2)}{1-|\varphi(\omega)|^2}=1\]
when $\varphi$ is a finite Blashcke product as noted in \cite{Ross-Garcia-Masreghi}
\end{proof}

\paragraph{Compactness for Liouville Weighted Composition Operators}When concerned with the compactness of Liouville weighted composition operators again some amount of insight can be gained immediately. 

\begin{proposition}If $A_{f,\varphi}$ is a compact Liouville weighted composition operator then necessarily, 
\[\lim_{|w|\rightarrow  1^-}\frac{|f(\omega)|^2|\varphi'(\omega)|^2(1-|\omega|^2)}{(1-|\varphi(\omega)|^2)^2}\cdot\left(\frac{1+|\varphi(w)|^2}{1-|\varphi(w)|^2}\right)=0\]
\end{proposition}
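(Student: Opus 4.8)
The plan is to mirror the proof of Proposition \ref{conj1}, replacing the uniform operator bound with the characterization of compactness in terms of the action on the normalized reproducing kernels. First I would recall the standard fact that the normalized kernels $k_w = \sqrt{1-|w|^2}K_w$ converge weakly to $0$ as $|w| \to 1^-$: indeed for any fixed polynomial $p$ one has $\langle p, k_w \rangle_{H^2} = \sqrt{1-|w|^2}\,\overline{p(w)} \to 0$, and since the $k_w$ are uniformly bounded in norm and polynomials are dense in $H^2(\mathbb{D})$, the weak convergence follows on all of $H^2$. Consequently $A_{f,\varphi}^* k_w \to 0$ weakly, and because $A_{f,\varphi}^*$ is compact (the adjoint of a compact operator is compact), it maps this weakly null net to a norm-null net, so $\|A_{f,\varphi}^* k_w\| \to 0$ as $|w| \to 1^-$.

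Next I would substitute the explicit formula for $\|A_{f,\varphi}^* k_w\|^2$ already computed in the proof of Proposition \ref{conj1}, namely
\[
\|A_{f,\varphi}^* k_w\|^2 = |f(w)|^2 |\varphi'(w)|^2 \left(1-|w|^2\right)\|K^{(1)}_{\varphi(w)}\|^2
\]
together with $\|K^{(1)}_{\varphi(w)}\|^2 = \dfrac{1+|\varphi(w)|^2}{\left(1-|\varphi(w)|^2\right)^3}$. (I note in passing that \eqref{action on kernel} as displayed has $|f(w)|$ rather than $|f(w)|^2$ on the right, but comparison with the stated conclusion of Proposition \ref{conj1} shows the intended expression carries $|f(w)|^2$; I would use that corrected form.) Combining these two factors gives exactly
\[
\|A_{f,\varphi}^* k_w\|^2 = \frac{|f(w)|^2 |\varphi'(w)|^2 (1-|w|^2)}{(1-|\varphi(w)|^2)^2}\cdot\frac{1+|\varphi(w)|^2}{1-|\varphi(w)|^2},
\]
and the conclusion is immediate from $\|A_{f,\varphi}^* k_w\| \to 0$.

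The only real obstacle is justifying the weak-to-norm step cleanly: one must confirm that $A_{f,\varphi}$ being compact and densely defined is enough to conclude $A_{f,\varphi}^*$ is compact and that the relevant net $\{k_w\}$ genuinely lies in $\mathcal{D}(A_{f,\varphi}^*)$ so that $A_{f,\varphi}^* k_w$ makes sense — but this was already established in the proof of the preceding proposition, where the action of $A_{f,\varphi}^*$ on kernel functions was computed explicitly via the lemma $A_{f,\varphi}^* K_w = \overline{f(w)\varphi'(w)}K^{(1)}_{\varphi(w)}$. Everything else is a routine substitution, so the proof will be short.

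\begin{proof}
Let $k_w = \sqrt{1-|w|^2}\,K_w$ be the normalized reproducing kernel at $w \in \mathbb{D}$. For any polynomial $p$, $\langle p, k_w\rangle_{H^2} = \sqrt{1-|w|^2}\,\overline{p(w)} \to 0$ as $|w| \to 1^-$, and since $\|k_w\|_{H^2} = 1$ for all $w$ and polynomials are dense in $H^2(\mathbb{D})$, it follows that $k_w \to 0$ weakly as $|w| \to 1^-$. If $A_{f,\varphi}$ is compact, then so is $A_{f,\varphi}^*$, and hence $\|A_{f,\varphi}^* k_w\|_{H^2} \to 0$ as $|w| \to 1^-$. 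By the computation in the proof of Proposition \ref{conj1},
\[
\|A_{f,\varphi}^* k_w\|_{H^2}^2 = |f(w)|^2|\varphi'(w)|^2(1-|w|^2)\,\|K^{(1)}_{\varphi(w)}\|_{H^2}^2 = \frac{|f(w)|^2|\varphi'(w)|^2(1-|w|^2)}{(1-|\varphi(w)|^2)^2}\cdot\frac{1+|\varphi(w)|^2}{1-|\varphi(w)|^2}.
\]
The claimed limit follows immediately.
\end{proof}
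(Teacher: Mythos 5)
Your proof is correct and follows essentially the same route as the paper: the paper likewise deduces $\|A_{f,\varphi}^*k_w\|\to 0$ from compactness as $|w|\to 1^-$ and then reads off the limit from the norm formula \eqref{action on kernel} established in Proposition \ref{conj1}. You simply spell out the steps the paper leaves implicit (weak nullity of the normalized kernels, compactness of the adjoint, and the typographical correction $|f(w)|\mapsto|f(w)|^2$ in \eqref{action on kernel}), which is fine but not a different argument.
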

\begin{proof}
Since $\|A^*_{f,\varphi}k_w\|\rightarrow 0$ as $|w|\rightarrow 1^-1$ from Equation \ref{action on kernel} which is equivalent to the above expression.
\end{proof}
\begin{proposition}\label{liminf}
If $A_{f,\varphi}$ is compact with $f\in H^2(\mathbb{D})$ and not identically zero then necessarily 
\[\limsup_n \{n^2|\varphi(z)|^{2n-1}|\varphi'(z)|^2\}<1\]
almost everywhere on $\mathbb{T}$.
\end{proposition}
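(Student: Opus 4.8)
The plan is to exploit the standard fact that a compact operator $A_{f,\varphi}$ must send the normalized reproducing kernels $k_w = \sqrt{1-|w|^2}\,K_w$ to $0$ in norm as $|w| \to 1^-$, and then to read off what this says about the \emph{symbols} by testing the adjoint against monomials (or, equivalently, by examining the Taylor coefficients of $A_{f,\varphi}^* k_w$). First I would recall from the preceding lemma that $A_{f,\varphi}^* K_w = \overline{f(w)\varphi'(w)}\,K^{(1)}_{\varphi(w)}$, so $A_{f,\varphi}^* k_w = \sqrt{1-|w|^2}\,\overline{f(w)\varphi'(w)}\,K^{(1)}_{\varphi(w)}$. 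Rather than only using its \emph{norm} (which gave the previous proposition), I would extract a single Fourier/Taylor coefficient: since $K^{(1)}_{v}(z) = \sum_{n\ge 1} n\,\bar v^{\,n-1} z^n$, the $n$-th coefficient of $A_{f,\varphi}^* k_w$ is $\sqrt{1-|w|^2}\,\overline{f(w)\varphi'(w)}\cdot n\,\overline{\varphi(w)}^{\,n-1}$. Compactness forces $\|A_{f,\varphi}^* k_w\| \to 0$, hence \emph{each} coefficient tends to $0$ uniformly in the appropriate sense; in particular $\sqrt{1-|w|^2}\,|f(w)|\,|\varphi'(w)|\,n\,|\varphi(w)|^{n-1} \to 0$ as $|w|\to 1^-$, for every fixed $n$.

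Next, the idea is to let $w \to \zeta$ radially for a.e.\ $\zeta \in \mathbb{T}$ and pass to the boundary. Here one uses that $f \in H^2$ and $\varphi$ is an $H^2$ self-map of the disk, so $f$, $\varphi$, and $\varphi'$ all have finite radial limits a.e.\ on $\mathbb{T}$; I would fix a point $\zeta$ at which all these radial limits exist, $|\varphi(\zeta)| \le 1$, and $f(\zeta) \ne 0$ (the latter holding a.e.\ since $f \not\equiv 0$ means $\log|f| \in L^1(\mathbb{T})$). The factor $\sqrt{1-|w|^2}$ is the obstruction to a naive limit, so the correct move is to be more careful about \emph{which} normalization kills which term: comparing the growth of the norm $\|K^{(1)}_{\varphi(w)}\|^2 = (1+|\varphi(w)|^2)/(1-|\varphi(w)|^2)^3$ against the decay $(1-|w|^2)$, and using the coefficientwise bound above, one obtains that the relevant dimensionless quantity is $n^2\,|\varphi(w)|^{2n-1}\,|\varphi'(w)|^2\,|f(w)|^2$ normalized correctly — and taking radial limits with $|\varphi(\zeta)| \le 1$ collapses the $(1-|w|^2)$-type factors appropriately, leaving the clean statement $\limsup_n n^2 |\varphi(\zeta)|^{2n-1}|\varphi'(\zeta)|^2 < 1$ a.e.

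The hard part, and where I expect to have to be most careful, is the interchange of the two limits: the $n\to\infty$ limit defining $\limsup_n$ and the $|w|\to 1^-$ boundary limit. Compactness gives decay of $\|A_{f,\varphi}^* k_w\|$ as $|w|\to 1$, but to conclude a statement about $\limsup_n$ at a \emph{boundary} point one needs either (i) to argue that $n^2|\varphi(w)|^{2n-1}|\varphi'(w)|^2|f(w)|^2 \cdot \tfrac{(1-|w|^2)}{(1-|\varphi(w)|^2)^2}$ is, for $|\varphi(w)|<1$, bounded by the $n$-independent quantity $\|A_{f,\varphi}^*k_w\|^2$ (up to constants), so that if $\limsup_n n^2|\varphi(\zeta)|^{2n-1}|\varphi'(\zeta)|^2 \ge 1$ at a positive-measure set of $\zeta$ one could build unit vectors witnessing non-compactness; or (ii) to supremize over $n$ first — i.e.\ note $\sup_n n^2 t^{2n-1}$ behaves like a function of $t\in[0,1)$ blowing up as $t\to 1$ — and feed this into the norm estimate. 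I would pursue route (i): assume for contradiction the stated inequality fails on a set of positive measure, choose for each large $n$ a point $w_n$ near the boundary where $n^2|\varphi(w_n)|^{2n-1}|\varphi'(w_n)|^2|f(w_n)|^2$ stays bounded below while $|w_n|\to 1$, and check that the corresponding normalized kernels $k_{w_n}$ converge weakly to $0$ yet $\|A_{f,\varphi}^* k_{w_n}\| \not\to 0$, contradicting compactness of $A_{f,\varphi}^*$ (equivalently of $A_{f,\varphi}$). Making the selection of $w_n$ rigorous — ensuring one can simultaneously push $|w_n|\to 1$, keep $|\varphi(w_n)|$ close to $1$ in a rate matched to $n$, and keep $|f(w_n)|$ bounded away from $0$ — is the delicate measure-theoretic core of the argument.
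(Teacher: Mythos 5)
There is a genuine gap, and it sits exactly where you flagged it: the passage from interior (reproducing--kernel) information to an almost-everywhere statement on $\mathbb{T}$. Your first step is in fact vacuous: the $n$-th Taylor coefficient of $A_{f,\varphi}^* k_w$ carries the factor $\sqrt{1-|w|^2}$, so for each \emph{fixed} $n$ it tends to $0$ as $|w|\to 1^-$ for essentially any symbols, compact operator or not; no information about the symbols is extracted. The real content would have to come from your route (i), but that step does not follow from the hypotheses. Failure of the stated inequality on a set $E\subset\mathbb{T}$ of positive measure means $|\varphi|=1$ and $\varphi'\neq 0$ a.e.\ on $E$ (for $t<1$ the quantity $n^2t^{2n-1}\to 0$, for $t=1$ it blows up), i.e.\ it is purely a statement about radial boundary values. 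This does not let you manufacture interior points $w_n$ with $|w_n|\to 1$ along which $(1-|w_n|^2)\,|f(w_n)\varphi'(w_n)|^2\,(1+|\varphi(w_n)|^2)/(1-|\varphi(w_n)|^2)^3$ stays bounded below: relating $1-|\varphi(w)|$ to $1-|w|$ along a sequence requires angular-derivative--type control that is simply not implied by $|\varphi|=1$ a.e.\ on $E$ (compare composition operators with inner symbols having no finite angular derivative anywhere: the kernel-based quantities can degenerate even though the operator is far from compact). On top of that you would need $|f(w_n)|$ bounded below along the same sequence, which a.e.\ nonvanishing of the boundary values of an $H^2$ function does not give you along an arbitrarily chosen interior sequence. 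So the ``delicate measure-theoretic core'' you deferred is not a technicality; it is the theorem, and in this form it is likely unprovable because normalized-kernel tests are strictly weaker than what the conclusion asserts.

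The paper avoids this entirely by testing compactness against a different weakly null family: the monomials $z^n$, which converge weakly to $0$ in $H^2$. Since $A_{f,\varphi}z^n = n\,f\,\varphi'\,\varphi^{\,n-1}$, the norm is computed as a boundary integral,
\[
\|A_{f,\varphi}(z^n)\|_{2}^{2}=\int_{\mathbb{T}}|f(z)|^{2}\,n^{2}|\varphi(z)|^{2n-1}|\varphi'(z)|^{2}\,dz ,
\]
and if the limsup condition fails on a set $E$ of positive measure then the integrand blows up on $E$, so (Fatou) the norms cannot tend to $0$, contradicting compactness since $f\not\equiv 0$ forces $\int_E|f|^2>0$. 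The lesson is that a.e.\ statements on $\mathbb{T}$ call for test vectors whose images are naturally expressed as boundary integrals, not for normalized reproducing kernels, which only probe interior quotients of the type $(1-|w|^2)/(1-|\varphi(w)|^2)^k$. If you want to salvage your write-up, replace the kernel family by the monomials and your contradiction scheme goes through almost verbatim.
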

\begin{proof}

For the sake of contradicition assume there existed a set $E\subset \mathbb{T}$ with non-zero measure where the above condition does not hold since compact operators take weakly converging sequences to norm convergent sequences we get a contradiction on the fact that $z^n$ converges weakly to zero but 
\[\|A_{f,\varphi}(z^n)\|_2^2=\int_{\mathbb{T}}|f(z)|^2 n^2|\varphi(z)|^{2n-1}|\varphi'(z)|^2\,dz\geq\int_{E}|f(z)|^2\,dz\]
and this holds for any function $f$. If $f$ is not identically zero we get a contradiction.    
\end{proof}

The above approach of computing the $L^2(\mathbb{T})$ norm for the monomials can be pushed further. Namely, one can compute the Hilbert Schmidt norm. When finite, this implies the operator is compact and thus gives a sufficient condition for compactness. 

\begin{proposition}
$A_{f,\varphi}$ is Hilbert-Schmidt if and only if
\[-\int_{\mathbb{T}}|f(z)|^2|\varphi'(z)|^2\frac{|\varphi(z)|^2(|\varphi(z)|^2+1)}{(|\varphi(z)|^2-1)^3}\, dz<\infty\]
\end{proposition}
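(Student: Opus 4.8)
The plan is to compute the Hilbert--Schmidt norm of $A_{f,\varphi}$ directly against the standard orthonormal basis $\{z^n\}_{n\geq 0}$ of $H^2(\mathbb{D})$, exploiting the fact that $\|A_{f,\varphi}\|_{HS}^2 = \sum_{n=0}^\infty \|A_{f,\varphi}(z^n)\|_{H^2}^2$ and that the $H^2$-norm of an element can be computed as its $L^2(\mathbb{T})$-norm via radial limits. First I would observe that $A_{f,\varphi}(z^n)(z) = f(z)\varphi'(z)\, n\,\varphi(z)^{n-1}$, so by the boundary integral representation of the $H^2$ norm,
\[
\|A_{f,\varphi}(z^n)\|_{H^2}^2 = \int_{\mathbb{T}} |f(z)|^2 |\varphi'(z)|^2\, n^2 |\varphi(z)|^{2n-2}\, dz,
\]
where the integral is understood with the normalized arc-length measure on $\mathbb{T}$ (matching the convention used in Proposition \ref{liminf}). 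This is exactly the quantity appearing in the proof of Proposition \ref{liminf}, so the setup is already consistent with the paper.

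Next I would sum over $n$ and interchange the sum and the integral, which is justified by the Tonelli theorem since every term is nonnegative. This reduces the computation to evaluating the pointwise series $\sum_{n=1}^\infty n^2 |\varphi(z)|^{2n-2}$ (the $n=0$ term vanishes). Setting $r = |\varphi(z)|^2 \in [0,1)$ almost everywhere on $\mathbb{T}$ (note $|\varphi| < 1$ a.e.\ unless $\varphi$ is inner, a case that can be handled separately or noted to make the integral diverge), the relevant closed form is $\sum_{n=1}^\infty n^2 r^{n-1} = \frac{1+r}{(1-r)^3}$. Substituting $r = |\varphi(z)|^2$ gives
\[
\sum_{n=1}^\infty n^2 |\varphi(z)|^{2n-2} = \frac{1 + |\varphi(z)|^2}{(1 - |\varphi(z)|^2)^3},
\]
and hence
\[
\|A_{f,\varphi}\|_{HS}^2 = \int_{\mathbb{T}} |f(z)|^2 |\varphi'(z)|^2 \frac{1 + |\varphi(z)|^2}{(1-|\varphi(z)|^2)^3}\, dz.
\]
To match the form stated in the proposition one rewrites the integrand: since $|\varphi(z)|^2 - 1 = -(1-|\varphi(z)|^2)$, we have $(|\varphi(z)|^2-1)^3 = -(1-|\varphi(z)|^2)^3$, so multiplying numerator and denominator by an extra factor of $|\varphi(z)|^2$ and absorbing the sign yields the displayed expression with the leading minus sign. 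Then $A_{f,\varphi}$ is Hilbert--Schmidt if and only if this quantity is finite, and since Hilbert--Schmidt operators are compact, this is the promised sufficient condition for compactness.

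The main obstacle is bookkeeping rather than conceptual: I need to make sure the normalizing constant in the $L^2(\mathbb{T})$ inner product is tracked consistently (the paper is somewhat cavalier about the $\frac{1}{2\pi}$ factor, writing $\int_{-\pi}^\pi$ in one place and $d\theta$ elsewhere), and I need to handle the algebraic identity relating $\frac{1+r}{(1-r)^3}$ to the stated form with the $|\varphi(z)|^2$ factor and the sign — the cleanest way is to note the stated integrand equals $-|f|^2|\varphi'|^2 \cdot \frac{|\varphi|^2(|\varphi|^2+1)}{(|\varphi|^2-1)^3}$, which after clearing the sign is $|f|^2|\varphi'|^2 \cdot \frac{|\varphi|^2(|\varphi|^2+1)}{(1-|\varphi|^2)^3}$, and this matches my computed integrand up to the extra factor $|\varphi(z)|^2$; reconciling that factor requires re-indexing the sum as $\sum_{n=1}^\infty n^2 |\varphi|^{2n}$ rather than $\sum n^2|\varphi|^{2n-2}$, which is the correct reading since $\|A_{f,\varphi}(z^n)\|^2$ carries $|\varphi|^{2n-2}$ but one should double-check whether the paper's convention puts $|\varphi|^{2n}$; I would verify by recomputing $\|A_{f,\varphi}(z^1)\|^2 = \int_{\mathbb{T}}|f|^2|\varphi'|^2\,dz$ against both forms and align accordingly. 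Apart from this constant-and-index check, the argument is a routine Tonelli interchange plus a geometric-series identity, and the ``only if'' direction is immediate from the definition of the Hilbert--Schmidt norm.
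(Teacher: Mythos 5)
Your approach is exactly the paper's: expand the Hilbert--Schmidt norm over the monomial basis, pass to the boundary $L^2(\mathbb{T})$ integral, interchange sum and integral, and sum the resulting series in $|\varphi(z)|^2$. The loose end you flagged resolves in your favor: since $A_{f,\varphi}z^n = n f\varphi'\varphi^{n-1}$, the correct exponent is $|\varphi|^{2n-2}$, so the pointwise sum is $\sum_{n\geq 1} n^2|\varphi|^{2n-2} = \frac{1+|\varphi|^2}{(1-|\varphi|^2)^3}$, exactly as you computed. The paper's displayed sum carries $|\varphi(z)|^{2n-1}$ (an indexing slip, not even consistent with its own closed form), and its final integrand $\frac{|\varphi|^2(|\varphi|^2+1)}{(1-|\varphi|^2)^3}$ corresponds to summing $n^2|\varphi|^{2n}$; the extra factor $|\varphi(z)|^2$ is therefore an error in the statement rather than in your calculation. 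The discrepancy is harmless for the ``only if'' direction (your integrand dominates the stated one since $|\varphi|<1$ a.e.), and the two finiteness conditions coincide as soon as $\int_{\mathbb{T}}|f|^2|\varphi'|^2\,dz<\infty$, which is itself the $n=1$ term and hence necessary for Hilbert--Schmidt; but as literally stated the ``if'' direction of the proposition needs your form of the integrand, so you should state the result with $\frac{1+|\varphi|^2}{(1-|\varphi|^2)^3}$ rather than ``align'' your computation to the paper's. One further point worth a sentence in a final write-up: justify that the boundary integral really computes $\|A_{f,\varphi}z^n\|_{H^2}^2$, i.e.\ that $f\varphi'\varphi^{n-1}$ lies in $H^2$ for every $n$ (equivalently that all monomials are in the domain), which is implicitly assumed by both you and the paper when summing over the basis.
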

\begin{proof}
Since $\varphi(z)$ and its derivative are analytic then in order for the quantity appearing in Proposition \ref{liminf} to be bounded we need $|\varphi|<1$ on $\mathbb{T}$. Hence, using the standard orthonormal basis for $H^2(\mathbb{D})$
\[\|A_{f,\varphi}\|_{\text{HS}}=\sum_{n=0}^\infty \int_{\mathbb{T}}|f(z)|^2 n^2|\varphi(z)|^{2n-1}|\varphi'(z)|^2\, dz = -\int_{\mathbb{T}}|f(z)|^2|\varphi'(z)|^2\frac{|\varphi(z)|^2(|\varphi(z)|^2+1)}{(|\varphi(z)|^2-1)^3}\, dz.\]
\end{proof}

\bibliographystyle{plain}
\bibliography{references}

\end{document}